\documentclass{amsart}
\usepackage[T1]{fontenc}
\usepackage[latin1]{inputenc}
\usepackage{latexsym,amssymb,amsmath}
\usepackage{amsthm}
\usepackage{longtable}
\usepackage{epsfig}
\usepackage{hhline}
\usepackage{epic}

\newcommand{\Galg}{\mathbf{G}}

\newcommand{\Ext}{\operatorname{Ext}}

\newcommand{\F}{\mathbb{F}}
\newcommand{\Fq}{\mathbb{F}_q}

\newcommand{\Z}{\mathbb{Z}}

\newcommand{\cH}{\mathcal{H}}

\newcommand{\SL}{\operatorname{SL}}

\newcommand{\uM}{\underline{M}}
\newcommand{\tm}{\texttt{map}}
\newcommand{\eProof}{\hfill$\Box$}
\newcommand{\GAP}{{\textsc GAP}}

\newtheorem{theorem}{Theorem}[section] 
\newtheorem{lemma}[theorem]{Lemma}

\newtheorem{proposition}[theorem]{Proposition}

\newtheorem{definition}[theorem]{Definition}
\newtheorem{remark}[theorem]{Remark}
\newtheorem{algorithm}[theorem]{Algorithm}
\newtheorem{conj}[theorem]{Conjecture}

\usepackage[pdftex,colorlinks=true,breaklinks=true]{hyperref}

\title[Computation of Kazhdan-Lusztig-polynomials]{Computation of
Kazhdan-Lusztig polynomials and some applications to finite groups}

\author{Frank Lübeck}
\address{
Lehrstuhl D für Mathematik\\
RWTH Aachen\\
Pontdriesch 14/16\\
D-52064 Aachen\\}
\email{Frank.Luebeck@Math.RWTH-Aachen.De}

\subjclass{20F55 20C08 20G05 20G10 20E28}

\begin{document}

\begin{abstract} 
We  discuss  a  practical  algorithm to  compute  parabolic  Kazhdan-Lusztig
polynomials. As an application  we compute Kazhdan-Lusztig polynomials which
are  needed to  evaluate a  character formula  for reductive  groups due  to
Lusztig.

Some coefficients of these  polynomials have interesting interpretations for
certain finite  groups. We find  examples of finite dimensional  modules for
finite groups  with much higher  dimensional first cohomology group  than in
all previously known cases.

Some of these  examples lead to the construction of  finite groups with many
maximal subgroups, contradicting an old conjecture by G.~E.~Wall.
\end{abstract}

\maketitle

\section{Introduction}\label{s:Intro}

Let  $\Galg$  be  a  connected   reductive  algebraic  group  over  a  field
$k$  of    finite  characteristic  $p$.  A  famous  formula  conjectured  by
Lusztig~\cite{LuCF}  (see Theorem~\ref{LuCF})  expresses the  characters and
dimensions  of  some  simple  $k\Galg$-modules  as  linear  combinations  of
characters and dimensions of certain well known modules. The coefficients in
these linear combinations involve Kazhdan-Lusztig  polynomials parameterized
by pairs of elements of the affine Weyl group of $\Galg$.

However,   these   polynomials  are   not   easy   to  compute,   even   for
groups  of  small   rank.  The  by  far  most   efficient  (and  technically
and   mathematically   sophisticated)   implementation   of   an   algorithm
to   compute   ordinary   Kazhdan-Lusztig   polynomials   is   the   program
\textsc{Coxeter~3}~\cite{Coxeter3} by Fokko du Cloux. The original statement
of Lusztig's character formula uses ordinary Kazhdan-Lusztig polynomials. We
tried  to use  \textsc{Coxeter~3} to  compute the  (finite number  of) these
polynomials needed for the case $\Galg = \textrm{SL}_6(k)$ (computing in the
affine Weyl  group of type $A_5$),  but this was not  successful because the
program needed  too much memory, even  on a modern computer.  The problem is
that  the parameterizing  elements  of  the needed  polynomials  have a  big
Coxeter  length (they  involve  the longest  element of  the  Weyl group  of
$\Galg$).

The first step to  handle this and larger cases is to use  the fact that the
needed polynomials can also be described as \emph{parabolic} Kazhdan-Lusztig
polynomials so  that the parameterizing  elements of the  needed polynomials
are much shorter (see the discussion in~\cite[8.22 and C.2]{Jan03}). But the
computation of these polynomials remains challenging.

In this article we use an elegant description of (parabolic) Kazhdan-Lusztig
polynomials which is due to  Soergel~\cite{Soe97} (these are closely related
to the polynomials originally defined by Kazhdan and Lusztig~\cite{KL79} and
Deodhar~\cite{Deo87}, see the comment after Definition~\ref{def:KL}).

Our  algorithm to  compute these  polynomials  works for  any Coxeter  group
and  parabolic  subgroup.  We  mention  two  of  its  features:  While  most
implementations  (and   the  usual   existence  proof)   of  Kazhdan-Lusztig
polynomials assume inductively that the polynomials for "smaller" parameters
are already  known, we  do not do  so. This allows  for a  trade-off between
memory  usage and  computing   time,  see~\ref{rem:KLalg}(a). (A  variant of
this strategy  was also used  by Scott  and Sprowl in~\cite{SS16}.)  For the
case that  it is  known that  the polynomials  to compute  have non-negative
coefficients, we give an upper bound  for all coefficients which  allows for
modular computations, see~\ref{KLcoeffbound}.

The algorithm  is described in Section~\ref{s:KLcomp}.  The general strategy
is given in  Algorithm~\ref{alguMx}. The crucial ingredient  of an efficient
implementation  is a  data structure  that  encodes the  Bruhat ordering  on
relevant elements. This is detailed in Algorithm~\ref{alg:bruhatmap} and may
be of interest for other applications. In Section~\ref{s:KLapp} we give more
details on the application of  our algorithm to Lusztig's character formula.
We have computed the relevant polynomials  for all simple $\Galg$ up to rank
$6$ and also for types $A_7$ and $A_8$.

In Section~\ref{s:GurConj} we  show in some detail how  some coefficients of
the polynomials computed in Section~\ref{s:KLapp} lead to examples of finite
groups which have a simple  finite dimensional module with large dimensional
first  cohomology  group.  (A  similar  argument  was  already  sketched  by
L.~L.~Scott in~\cite{Scott03}.)  This has  consequences for a  conjecture of
R.~M.~Guralnick,  which with  our examples  in mind  seems unlikely  to hold
(although it is not disproved).

In  the  last  Section~\ref{s:WallConj}  we construct  from  an  example  in
Section~\ref{s:GurConj} an infinite series of  finite groups which have more
maximal subgroups than predicted by an old conjecture of G.~E.~Wall.

\noindent\textbf{Acknowledgements.}   I   learned  about   the   interesting
consequences of  the computations  mentioned in  Section~\ref{s:KLapp} which
are   described  in   Sections~\ref{s:GurConj},~\ref{s:WallConj}  during   a
workshop  on  \emph{Cohomology bounds  and  growth  rates} at  the  American
Institute of Mathematics (AIM) in Palo  Alto. During that workshop Len Scott
made me aware  of the relevance of some  of the data I had  computed and his
work~\cite{Scott03}.  Furthermore,  he  provided me  with  some  non-trivial
Kazhdan-Lusztig  polynomials  in types  $A_6$,  $A_7$  computed by  him  and
Sprowl, so that I could check  if they coincide with polynomials computed by
my own programs (they did).

When Bob  Guralnick saw the  example in type $F_4$  in Theorem~\ref{maxdimH}
(which was  computed during a  presentation at the workshop)  he immediately
noticed that this leads to counterexamples to Wall's conjecture, and he gave
me a sketch of the proof of Theorem~\ref{cexWall}.

I  thank both  of them  for allowing  me  to use  their hints  in the  later
sections of this paper.  I also wish to thank the AIM  and the organizers of
the workshop for inviting me to participate in this inspiring event.

\section{Computing parabolic Kazhdan-Lusztig polynomials}\label{s:KLcomp}
\subsection{Notation and basic facts.}\label{ss:KLnotation}
Let $(W,S)$  be a  Coxeter system with  a finite generating  set $S$  and $J
\subseteq S$. We write $W_J =  \langle J \rangle$ for the parabolic subgroup
of $W$  generated by $J$, and  we write $W^J$  for the set of  reduced right
coset representatives of $W_J \setminus W$.

For $x \in  W$ we write $l(x)$ for  the length of $x$, that  is the smallest
number $n$ such that $x = s_1 s_2 \cdots s_n$ with $s_i \in S$ for $1 \leq i
\leq n$.  Then the sequence  $(s_1, s_2, \ldots,  s_n)$ is called  a reduced
word for $x$.

We write $\leq$  for the Bruhat ordering  of $W$. We will  use the following
recursive definition  of this partial  ordering: $1  \in W$ is  smaller than
every other element in $W$, and for $x \in  W$, $s \in S$ with $x = x's$ and
$l(x') < l(x)$ we  define $\{ y \in W\mid\; y \leq x\}  := \{y \in W\mid\; y
\leq x'\}  \cup \{ ys  \in W\mid\;  y \leq x'\}$.  See~\cite[5.9]{Hum90} for
other characterizations of the Bruhat order and a proof that the ordering is
well defined by our definition.

Observe that  for any  $x \in W$  the set  $\{y \in W\mid\;  y \leq  x\}$ is
finite.
An element $x  \in W$ is in $W^J$ if  and only if $l(sx) > l(x)$  for all $s
\in J$.

\subsection{Parabolic Kazhdan-Lusztig polynomials.}\label{ss:parKL}
Deodhar introduced  in~\cite{Deo87} the notion of  parabolic Kazhdan-Lusztig
polynomials of  $W$ with  respect to  $W_J$. We  recall a  slightly modified
setup introduced by Soergel in~\cite{Soe97}.

Let  $\cH$ be  the  Hecke algebra  of  $W$  over the  ring  $A =  \Z[v,1/v]$
of  Laurent  polynomials in  an  indeterminate  $v$  over the  integers.  As
$A$-algebra $\cH$  is generated by  elements $C_s$, $s  \in S$. There  is an
involution $\bar{\  }: \cH  \to \cH$  which is a  ring homomorphism  with $v
\mapsto 1/v$ and $\bar{C_s} = C_s$.

There is  an $\cH$-module  $M$ (induced  from a linear  module of  the Hecke
algebra  of the  parabolic subgroup  $W_J$,  for $J=\emptyset$  this is  the
regular module of $\cH$) which can be described as follows.

$M$ is  a free $A$-module with  an $A$-basis $\{M_x\mid\; x  \in W^J\}$ such
that the generators $C_s$ of $\cH$ act by
\begin{equation}\label{eq:Mxoper}
M_x C_s = \left\{ \begin{array}{ll}
M_{xs} + v M_x,& \textrm{ if } xs \in W^J, l(xs) > l(x)\\
M_{xs} + v^{-1} M_x,& \textrm{ if } xs \in W^J, l(xs) < l(x)\\
(v + v^{-1}) M_x,& \textrm{ if } xs \notin W^J
\end{array} \right.
\end{equation}
Remark. It is clear that for $x \in  W^J$ with $l(xs) < l(x)$ we always have
$xs \in W^J$. So, the third case can only occur when $l(xs) > l(x)$.

There is an involution $\bar{\ }: M  \to M$ with $\bar{M_1} = M_1$ and which
is compatible with the involution on $\cH$, that is $\overline{mh} = \bar{m}
\bar{h}$ for all $m \in M$ and $h \in H$.

\begin{lemma}\label{lemmaBx}
\mbox{}
\begin{itemize}
\item[(a)] Let $x' \in W^J$ and  $s \in  S$ such that $x = x's \in W^J$ and
$l(x) > l(x')$. 
Let $B_{x'} = \overline{B_{x'}} \in M$  be a $\bar{\ }$-invariant element of
the form
\[ B_{x'} = M_{x'} + \sum_{W^J \ni y < x'} h_y M_y \textrm{ with }
h_y \in A.\] 
Then the elements $B_x := B_{x'} C_s$ and $B'_x := B_{x'} C_s - (v + v^{-1})
B_{x'}$ are also $\bar{\ }$-invariant and of the form
\[\begin{array}{lcl}
B_x &=& M_x + \sum_{W^J \ni y < x} f_y M_y \textrm{ with }f_y \in A, \\
B'_x &=& M_x + \sum_{W^J \ni y < x} g_y M_y \textrm{ with }g_y \in A.
\end{array}\]
More precisely,  (setting $h_{x'}  = 1$) we  have for $y  \leq x'$  with $ys
\notin W^J$, that $y \leq x$ and
\[ f_y = (v+v^{-1})h_y, \quad g_y = 0.\]
And for $y \leq x'$ with $y < ys \in W^J$ we have
\[\begin{array}{ll}
f_y = h_{ys} + v h_y,& f_{ys} = h_y + v^{-1} h_{ys},\\
g_y = h_{ys} - v^{-1} h_y, & g_{ys} = h_y - v h_{ys}.
\end{array}\] 
\item[(b)] For every $x \in W^J$ there exists a $\bar{\ }$-invariant element
$B_x \in M$ of the form
\[ B_{x} = M_{x} + \sum_{W^J \ni y < x} f_y M_y \textrm{ with }
f_y \in A.\]
\end{itemize}
\end{lemma}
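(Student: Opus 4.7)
My plan is to prove (a) by direct computation with the action formulas in~\eqref{eq:Mxoper}, and then deduce (b) by a short induction on $l(x)$.

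For (a), the $\bar{\ }$-invariance of $B_x = B_{x'}C_s$ and $B'_x = B_{x'}C_s - (v+v^{-1})B_{x'}$ is immediate, since $B_{x'}$, $C_s$ and $v+v^{-1}$ are all $\bar{\ }$-invariant and the module involution satisfies $\overline{mh}=\bar m\bar h$. The substance is the explicit formulas: expanding $B_{x'}C_s = \sum_{W^J\ni y\leq x'} h_y\, M_yC_s$ with $h_{x'}=1$ and applying~\eqref{eq:Mxoper} case by case. When $ys\notin W^J$, only a contribution $(v+v^{-1})h_y$ to the coefficient of $M_y$ arises, giving $f_y=(v+v^{-1})h_y$ and $g_y=f_y-(v+v^{-1})h_y=0$. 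When $y<ys\in W^J$, the indices $y$ and $ys$ in $\{z\leq x'\}$ pair up: $M_yC_s$ contributes $vh_y$ to $M_y$ and $h_y$ to $M_{ys}$, while $M_{ys}C_s$ contributes $v^{-1}h_{ys}$ to $M_{ys}$ and $h_{ys}$ to $M_y$; summing these gives exactly the asserted $f_y=h_{ys}+vh_y$ and $f_{ys}=h_y+v^{-1}h_{ys}$, and the $g$-formulas follow by subtracting $(v+v^{-1})$ times the appropriate $h$-coefficient.

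Taking $y=x'$ and $ys=x$ in the last case produces the leading coefficient $f_x = h_{x'} + v^{-1}\cdot 0 = 1$, so $M_x$ appears with coefficient $1$. Every other index in the expansion is either some $y\leq x'$ or some $ys$ with $y\leq x'$, hence $\leq x$ by the recursive characterisation of the Bruhat ordering recalled in~\ref{ss:KLnotation}; equality at $x$ occurs only through the leading pair, so the remaining sums in $B_x$ and $B'_x$ are supported on $\{z\in W^J: z<x\}$ as required. Part (b) is then a routine induction on $l(x)$: take $B_1=M_1$ as the base case; for $l(x)>0$ pick $s\in S$ with $x=x's$ and $l(x')<l(x)$, observe that the remark after~\eqref{eq:Mxoper} forces $x'=xs\in W^J$, and apply (a) to the inductively supplied $B_{x'}$ to obtain $B_x := B_{x'}C_s$.

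The only real difficulty is the bookkeeping in (a): correctly separating the three cases of~\eqref{eq:Mxoper}, pairing $y$ with $ys$ when both lie in $W^J$, and checking the Bruhat-support claim. The $\bar{\ }$-invariance and the induction for (b) are formal.
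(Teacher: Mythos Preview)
Your proof is correct and follows essentially the same approach as the paper: part~(a) is obtained by a direct case analysis using formula~\eqref{eq:Mxoper}, and part~(b) by induction on $l(x)$ applying part~(a). The paper's own proof is terser (it merely calls part~(a) an ``immediate consequence'' of~\eqref{eq:Mxoper} and gives the same induction for~(b)), so your expanded computation of the $f_y$, $g_y$ and the Bruhat-support check just spell out what is implicit there.
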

\textbf{Proof.}   Part   (a)   is    an   immediate   consequence   of   the
formula~(\ref{eq:Mxoper}).

Part (b) follows by induction on the length  of $x$. If $x=1$ we take $B_x =
M_1$ which is  $\bar{\ }$-invariant. Otherwise let $x=x's \in  W^J$ with $x'
\in  W^J$, $s  \in S$,  and $l(x')  < l(x)$.  By induction  there exists  an
element  $B_{x'}  \in  M$  of  the desired  form.  Now  part~(a)  shows  two
possibilities to find a $B_x$ as desired.
\eProof

It is possible to find elements $B_x$  as in the lemma with more restrictive
conditions  on the  coefficients $f_y$.  This is  detailed in  the following
theorem.

\begin{theorem}\label{thm:KL}
\begin{itemize}
\item[(a)] For  any $x \in W^J$  there exists a unique  $\bar{\ }$-invariant
element
\[ \uM_x = \overline{\uM_x} = \sum_{y \in W^J} m_{y,x} M_y\]
with  $m_{x,x} = 1$ and $m_{y,x} \in v \Z[v]$ if $x \neq y$.
\item[(b)] If in~(a) the polynomial $m_{y,x} \neq 0$, then $y \leq x$.
\end{itemize}
\end{theorem}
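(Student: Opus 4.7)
The plan is to prove (a) and (b) together by induction on $l(x)$. I would first establish, by induction on $l(y)$, that each $\bar{M}_y$ has the triangular form $\bar{M}_y = M_y + \sum_{w \in W^J,\, w < y} \beta_{w,y} M_w$ with $\beta_{w,y} \in A$: the base case $\bar{M}_1 = M_1$ is immediate, and in the step one takes $B_y$ from Lemma~\ref{lemmaBx}(b), writes $M_y = B_y - \sum_{w<y} f_w M_w$, applies $\bar{\ }$, and substitutes the inductive description of each $\bar{M}_w$.

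For uniqueness in (a): if $\uM_x$ and $\uM'_x$ both satisfy the conditions, the difference $D = \sum_{y<x} c_y M_y$ is $\bar{\ }$-invariant with every $c_y \in v\Z[v]$. Picking $y$ maximal in Bruhat order with $c_y \neq 0$ and computing the coefficient of $M_y$ in $\bar{D}$ via the preceding triangularity, only the $z = y$ term survives (by maximality), yielding $c_y = \bar{c}_y$. Since $v\Z[v] \cap v^{-1}\Z[v^{-1}] = 0$, this forces $c_y = 0$, a contradiction.

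For existence and (b) I induct on $l(x)$, with $\uM_1 := M_1$ as the base case. For the step, I would start from some $B_x$ supplied by Lemma~\ref{lemmaBx}(b): $\bar{\ }$-invariant, equal to $M_x$ plus a combination of $M_y$ with $y < x$, supported on $\{w : w \leq x\}$. Then I sweep through the $y \in W^J$ with $y < x$ in any linear extension of the reverse Bruhat order, at each $y$ correcting the current coefficient $\tilde{f}_y$ of $M_y$ as follows. The reduction map $A^{\bar{\ }} \to A/v\Z[v]$ is an isomorphism (its inverse is symmetric extension, $a_0 + \sum_{n\geq 1} a_{-n} v^{-n} \mapsto a_0 + \sum_{n\geq 1} a_{-n}(v^n + v^{-n})$), so there is a unique $\bar{\ }$-invariant $p_y \in A$ whose non-positive part agrees with that of $\tilde{f}_y$. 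Subtracting $p_y \uM_y$ preserves $\bar{\ }$-invariance (both summands are $\bar{\ }$-invariant), pushes $\tilde{f}_y$ into $v\Z[v]$, and, by the inductive form of (b) applied to $\uM_y$, affects only positions $w \leq y$, so it neither disturbs already-corrected larger positions nor widens the support beyond $\{w : w \leq x\}$. After finitely many steps the sweep yields $\uM_x$ satisfying both (a) and (b).

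The main obstacle is the bookkeeping that makes the sweep work: the correction at $y$ must not undo corrections at positions $z > y$ that have already been brought into $v\Z[v]$. This is precisely why one is forced to carry (b) inductively alongside (a); once that is set up, the construction is routine.
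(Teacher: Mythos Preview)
Your proof is correct and follows essentially the same route as the paper: the existence argument is precisely the constructive procedure of Algorithm~\ref{alguMx} (start from a $B_x$ as in Lemma~\ref{lemmaBx}(b) and sweep downward subtracting symmetric parts), in the variant---explicitly noted in Remark~\ref{rem:KLalg}(a)---where one uses the inductively known $\uM_y$ rather than an arbitrary $B_y$ for the corrections. The paper defers uniqueness to Soergel, while you supply the standard direct argument via the triangularity of $\bar M_y$; this is a routine addition rather than a different idea.
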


\textbf{Proof.}   This  is   ~\cite[Theorem   3.1]{Soe97}.  The   uniqueness
follows   easily    from   the   existence    (see~~\cite[2.4]{Soe97}).   In
Algorithm~\ref{alguMx}  below  we show  how  to  compute $\uM_x$  as  in~(a)
and~(b)  of the  theorem as  $A$-linear combination  of elements  $B_y$ from
Lemma~\ref{lemmaBx}(b). This yields an alternative constructive proof of the
existence of $\uM_x$.
\eProof

\begin{definition}\label{def:KL} \rm 
In this paper we call the polynomials  $m_{y,x} \in \Z[v]$ for $x,y \in W^J$
in Theorem~\ref{thm:KL} the  \emph{parabolic Kazhdan-Lusztig polynomials} of
$W$ with respect  to $W_J$. In the  case $J = \emptyset$ they  are also just
called (ordinary) \emph{Kazhdan-Lusztig polynomials}.

But  note  that  the   (parabolic)  Kazhdan-Lusztig  polynomials  originally
defined  by Kazhdan-Lusztig  and  Deodhar are  slightly  different: You  get
them  from  the $m_{y,x}$  as  $v^{l(x)-l(y)}  m_{y,x}(1/v)$, considered  as
polynomials in $v^2$. See~\cite[2.6 and 3.2.(1.)]{Soe97}. In particular, the
Kazhdan-Lusztig $\mu(y,x)$-function  is given by  the coefficient of  $v$ in
$m_{y,x}$. We do not need this function in our setup.
\end{definition}

We can write any Laurent polynomial $f  \in A = \Z[v,1/v]$ in the form $f(v)
= f^-(1/v) + f^0 + f^+(v)$ with $f^0 \in \Z$ and $f^+, f^- \in v\Z[v]$. Then
$f^\textrm{sym}(v) := f^-(1/v)  + f^0 + f^-(v)$ is  $\bar{\ }$-invariant and
$f(v) - f^\textrm{sym}(v) \in v\Z[v]$. With  this notation we can describe a
computation of the $\uM_x$ in Theorem~\ref{thm:KL}.

\begin{algorithm}\label{alguMx} \rm
\textbf{Input:} An element $x \in W^J$.

\textbf{Output:} The element $\uM_x$ from Theorem~\ref{thm:KL}.

(1) Initialize $\uM_x := B_x = M_x + \sum_{W^J \ni y < x} f_y M_y$ with 
any $\bar{\ }$-invariant element as in Lemma~\ref{lemmaBx}(b).

(2) Find a $y \in W^J$ of maximal length such that $f_y \notin v\Z[v]$.

(3) If in~(2) no such $y$ is found, then $\uM_x$ has the desired form and 
we return it.

(4) Otherwise compute a $B_y$ as in Lemma~\ref{lemmaBx}(b), substitute 
$\uM_x := \uM_x - f^\textrm{sym}_y B_y$ and return to step~(2). 
\end{algorithm}

\textbf{Proof.}  It is  clear  that in  step~(4) the  new  $\uM_x$ is  again
$\bar{\ }$-invariant and that the new coefficient of $M_y$ is in $v\Z[v]$.

Furthermore, in step~(4)  only the coefficient of $M_y$  and coefficients of
$M_z$ where $l(z) < l(y)$ are changed (and the coefficients of $M_z$ with $z
\neq y$ and $l(z) \geq l(y)$ are not changed).

This  shows that  the  algorithm  will terminate  after  at  most $|\{y  \in
W^J\mid\; y < x\}|$ loops.
\eProof

\begin{remark}\label{rem:KLalg} \rm
\mbox{}
\begin{itemize}
\item[(a)] Assume that for $x\in W^J$ we have  $x = x's$, $s \in S$, $x' \in
W^J$, $l(x')  < l(x)$.  Then, in step~(1)  of Algorithm~\ref{alguMx}  we can
initialize  $\uM_x$ with  $\uM_{x'} C_s  = M_x  + \sum_{  y <  x} f_y  M_y$.
Lemma~\ref{lemmaBx}(a) shows that in this case all $f_y$ are polynomials, so
the $f_y^\textrm{sym}(v) = f_y(0)$  are constant polynomials. If furthermore
we  choose in  step~(4)  always $B_y  = \uM_y$,  then  the substitutions  of
$\uM_x$  in that  step will  never change  the absolute  term of  any $f_y$.
Therefore, we have  the formula \[ \uM_x = \uM_{x'}  C_s - \sum_{y<x} f_y(0)
\uM_y. \]  This yields  the existence  proof of  Kazhdan-Lusztig polynomials
which is usually found in the literature.

In the  other extreme, we  could start with $B_1  = M_1$ and  use throughout
the  Algorithm~\ref{alguMx} elements  $B_x$  as  constructed recursively  in
Lemma~\ref{lemmaBx}(a). In  that case we  would end up with  $\uM_x$ without
any need of Kazhdan-Lusztig polynomials $m_{y,z}$ for $z \neq x$. (This
strategy was used in~\cite{SS16} to compute an interesting Kazhdan-Lusztig
polynomial in type $A_8$, see remark before Theorem~\ref{maxdimH}.)

Alternatively,  we   can  use   the  $B'_x$  instead   of  the   $B_x$  from
Lemma~\ref{lemmaBx}(a). This leads to much sparser elements and saves memory
during the computation.

We can  also store some  already computed  $B_{x'}$ or $\uM_{x'}$  and reuse
them during the computation  of some $\uM_x$ with $x' <  x$. This shows that
our Algorithm~\ref{alguMx} is  quite flexible, it allows to  trade memory or
disk space against computing time.

\item[(b)] In step~(2) of Algorithm~\ref{alguMx} we can also search all such
$y$ of the same maximal length together  and then compute in step~(4) all of
the $f^\textrm{sym}_yB_y$  for these  $y$ in  parallel (on  a multiprocessor
machine, or on several machines).

\item[(c)] It is easy to  see that in all $B_x = M_x +  \sum_{W^J \ni y < x}
f_y M_y$  constructed in  Lemma~\ref{lemmaBx} the Laurent  polynomials $f_y$
and so also the $m_{y,x}$ have the  form $v^a$ times a Laurent polynomial in
$v^2$, where $a \equiv l(x)-l(y) \pmod 2$ (that is, every second coefficient
is  zero). This  can  be used  in  implementations to  save  memory for  the
polynomials.
\end{itemize}
\end{remark}

For an  implementation of Algorithm~\ref{alguMx} and  the recursion formulae
in~\ref{lemmaBx} it is crucial to find  a way to efficiently encode elements
$B_x$  and to  efficiently evaluate  the recursion  in~\ref{lemmaBx}(a). For
this we now define an appropriate data structure.

\begin{definition}\label{defmap}\rm
We fix an ordering  of $S = \{s_1,\ldots,s_r\}$. Let $x \in  W^J$ and $I_x =
\{ y \in  W^J \mid\; y \leq x  \}$ the Bruhat interval of  elements in $W^J$
between $1$  and $x$. A  \emph{Bruhat map of  $I_x$} is a  list \texttt{map}
whose entries  are in  bijection with  $I_x$, the entries  are lists  of $r$
integers, such that we have:
\begin{enumerate}
\item[(A)] 
If $y, y' \in I_x$ and $y <  y'$ then $y$ corresponds to an earlier position
in \texttt{map}  (that is, \texttt{map}  encodes an ordering of  $I_x$ which
refines the Bruhat order).
\item[(B)] 
If $y$  corresponds to the $i$-th  entry \texttt{map[i]} and $1  \leq j \leq
r$,  then the  $j$-th entry  of \texttt{map[i]}  is either  the position  in
\texttt{map} corresponding  to the element  $ys_j$ if  $ys_j \in I_x$  or an
integer $\leq 0$ otherwise.
\end{enumerate}
\end{definition}

If in such a Bruhat map of $I_x$  the element $y \in W^J$ corresponds to the
$i$-th entry, then we  can easily read off from this  $i$-th entry for which
$s \in S$ the  element $ys$ is in $I_x$ and in that  case if $ys$ is shorter
or longer  than $y$.  In particular, by  a simple recursion,  we can  find a
reduced word for $y$ and the length of $y$.

With respect to  this Bruhat map, it  is easy to encode an  element $B_y$ as
in~\ref{lemmaBx} as a list of pairs,  where a summand $f_z M_z$ is specified
by a pair  consisting of the index of \texttt{map}  corresponding to $z$ and
the  Laurent polynomial  $f_z \in  A$. It  is obvious  how to  add two  such
expressions and it is easy to  compute a product $B_yC_s$ using the formulae
in~\ref{lemmaBx}(a).

In the next  algorithm we describe how  to compute a Bruhat map  for a given
element $x \in  W^J$. (A similar, but more sophisticated,  algorithm for the
special case $J=\emptyset$ was described by du Cloux in~\cite{duCloux02}.)

We assume that we can invert and  multiply elements of the Coxeter group $W$
and  that for  $w \in  W$ we  can determine  its left  descent set  $\{s \in
S\mid\; l(sw) <  l(w)\}$. If $s$ is in  the left descent set of  $w$, we can
write $w = s  (sw)$ and then $l(sw) < l(w)$; this way  we find recursively a
reduced word for any $w \in W$  and the length of $w$. This functionality is
for example provided by the CHEVIE software package, see~\cite{CHEVIE}.

\begin{algorithm}\label{alg:bruhatmap}
\rm
\textbf{Input:} A  Coxeter group $W$ with  generators $S$ (given in  a fixed
ordering) in some  representation which allows to compute  left descent sets
of elements, a subset $J \subseteq S$ and an $x \in W^J$.

\textbf{Output:}   A    Bruhat   map    \texttt{map}   of   $I_x$    as   in
definition~\ref{defmap}.

(1) We compute a reduced word $(s_1, s_2, \ldots, s_k)$ for $x$.

(2) We  initialize the  list \tm\ with  its first  entry \texttt{\tm[1]}$:=$
\texttt{[0,\ldots,0]}.

(3) We keep during the algorithm a list \texttt{reps} of the elements in the
Bruhat  interval  $I_x$, we  initialize  \texttt{reps[1]}  with the  trivial
element of $W$.

(4) Furthermore, we keep  for each element $y \in I_x$ a  list of indices of
the elements $\{w \in W^J\mid\; w  < y, l(w) = l(y)-1\}$ (the \emph{coatoms}
of $y$), we initialize \texttt{coatoms[1]} as the empty list.

Now assume that we have collected in \tm, \texttt{reps} and \texttt{coatoms}
the information for the $n$ elements in the Bruhat interval $I_{x'}$ for $x'
= s_1s_2\cdots s_{i-1}$ with some $i\leq  k$ ($i = 1$ after initialization).
We show how to extend the data structures to $I_{x's_i}$.

(5) For  \texttt{j} from  $1$ to  $n$ we consider  $y :=  $ \texttt{reps[j]}
$s_i$. If the entry  of $s_i$ in \texttt{\tm[j]} is not zero,  then $y$ is a
smaller element that is already in our list.

Otherwise we compute  $y$. If the left descent set  of $y$ contains elements
from $J$ then $y \notin W^J$ and we change the entry of \texttt{maps[j]} for
$s_i$ to $-1$.

(5a) Otherwise  $y$ is  a new  element in  $I_{x's_i}$ and  we append  it to
\texttt{reps}. We  can record  the entry for  $s_i$ in  \texttt{\tm[j]}, and
we  know the  \tm-entry  for  $y$ corresponding  to  $s_i$,  the others  are
initialized to zero.

(5b)  The coatoms  of  the  new element  $y$  are  \texttt{reps[j]} and  the
elements $w  s_i$ where $w$ is  a coatom of \texttt{reps[j]}  and $l(ws_i) >
l(w)$ (we can decide this condition because we have already considered those
$ws_i$ in this  loop). So, we can easily add  the \texttt{coatoms} entry for
$y$.

(6) Now  we consider for all  \texttt{j }$> n$ the  list \texttt{\tm[j]} and
all  $s \in  S\setminus  \{s_i\}$. If  $y :=  $  \texttt{reps[j]}$\cdot s  <
$\texttt{ reps[j]} then $y$ must be  one of the coatoms of \texttt{reps[j]}.
In  this case  we  adjust \texttt{\tm[j]}  and the  \tm-entry  for $y$  with
respect to $s$.

(7)  Optionally, we  can save  memory by  deleting all  \texttt{reps[j]} and
\texttt{coatoms[j]} for those  $j$ for which all  entries of \texttt{\tm[j]}
are positive or $-1$, these are no longer needed.

(8) If the information for $I_x$  is complete (above steps are completed for
$i=k$), return \tm. Otherwise increase $i$ by one and return to step~(5).
\end{algorithm}

\textbf{Proof.} From our definition of the  Bruhat order it is clear that we
find the Bruhat  interval $I_x$ as the intersection of  $W^J$ with the union
of $I_{x'}$  and $I_{x'}s$ if $l(x')  < l(x)$ and  $x's = x$. It  follows by
induction that  after finishing steps~(5)-(6) for  some $i \leq k$  the list
\texttt{reps} contains the elements of $I_{x'}$ for $x' = s_1\cdots s_i$.

That  we find  in step~(5b)  the  correct set  of coatoms  follows from  the
following  characterization of  the coatoms:  if $(t_1,  \ldots, t_k)$  is a
reduced  word for  $y$ then  the  coatoms of  $y$ are  the partial  products
$t_1\cdots  \hat{t_j} \cdots  t_r$ ($t_j$  left  out), which  are of  length
$r-1$. We refer to~\cite[5.9]{Hum90} for more details.

It  is easy  to check,  that in  all  steps of  our algorithm  we only  need
information which was computed (and not deleted) before.
\eProof

The following  lemma can be  useful for  implementations of an  algorithm to
compute  Kazhdan-Lusztig  polynomials.  Instead of  computing  with  Laurent
polynomials over  the integers  one could  compute with  Laurent polynomials
over rings  $\Z/m_i\Z$ for some  mutually coprime  numbers $m_i \in  \Z$ and
reconstruct the integer polynomials with the Chinese remainder theorem.

In  many  cases  it  is  known  that  the  coefficients  of  Kazhdan-Lusztig
polynomials   are   non-negative.   For  example,   whenever   $J=\emptyset$
by~\cite{EW12},  and  so also  whenever  $W_J$  is  finite because  in  this
case  parabolic  Kazhdan-Lusztig polynomials  are  equal  to ordinary  ones,
see~\cite[3.4]{Soe97}.  (See also  the  interpretation  of the  coefficients
in~\ref{thm:KLextdim}.) If  we know an  upper bound for the  coefficients of
Kazhdan-Lusztig  polynomials then  we can  reconstruct these  polynomials by
computing them modulo several coprime  numbers $m_i$ such that their product
is larger than this bound.

\begin{lemma}\label{KLcoeffbound}
Assume  that   for  $W$,  $J$  the   parabolic  Kazhdan-Lusztig  polynomials
$m_{y,x}(v)$  for all  $x,y \in  W^J$ have  non-negative coefficients.  Then
$2^{l(x)}$ is an upper bound for the coefficients of $m_{y,x}(v)$.
\end{lemma}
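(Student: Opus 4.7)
Since all coefficients of $m_{y,x}(v)$ are assumed non-negative, each individual coefficient is bounded by $m_{y,x}(1)$, the sum of all coefficients. So it suffices to prove the stronger bound $m_{y,x}(1) \leq 2^{l(x)}$ for every $y \in W^J$, which I will establish by induction on $l(x)$. The base case $l(x)=0$ gives $\uM_1 = M_1$, for which $m_{y,1}(1) = \delta_{y,1} \leq 1 = 2^0$.

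For the inductive step I choose $s \in S$ and $x' \in W^J$ with $x = x's$ and $l(x') = l(x)-1$, and use the formula from Remark~\ref{rem:KLalg}(a),
\[ \uM_x \;=\; \uM_{x'} C_s \;-\; \sum_{\substack{y \in W^J \\ y < x}} f_y(0)\,\uM_y, \]
where the $f_y$ are the (polynomial) coefficients of $M_y$ in the expansion $\uM_{x'} C_s = M_x + \sum_{y<x} f_y M_y$. Applying Lemma~\ref{lemmaBx}(a) with $h_z = m_{z,x'}$ and reading off constant terms case by case, I find that $f_y(0)$ is in every case either $0$ or equal to the coefficient of $v$ in some $m_{z,x'}$; by the non-negativity hypothesis, therefore $f_y(0) \geq 0$ for all $y < x$. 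Since also $m_{z,y}(1) \geq 0$ for each $y < x$, evaluating the displayed equation at $v=1$ and taking the $M_z$-coefficient gives the inequality
\[ m_{z,x}(1) \;\leq\; \bigl(\uM_{x'} C_s\bigr)_z\big|_{v=1}. \]

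To bound the right-hand side I compute, using~\eqref{eq:Mxoper}, that the coefficient of $M_z$ in $\uM_{x'} C_s$ equals $(v+v^{-1})\,m_{z,x'}$ if $zs \notin W^J$, and otherwise $v^{\pm 1} m_{z,x'} + m_{zs,x'}$ depending on whether $l(zs) > l(z)$ or $l(zs) < l(z)$. At $v=1$ every case yields either $2\,m_{z,x'}(1)$ or $m_{z,x'}(1) + m_{zs,x'}(1)$, which by the induction hypothesis is at most $2 \cdot 2^{l(x')} = 2^{l(x)}$. Combining this with the previous inequality completes the induction.

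The main technical point is the non-negativity of the constant terms $f_y(0)$: everything else is a straightforward doubling argument for the $v=1$ evaluation of the $C_s$-action, but the subtraction $-\sum f_y(0) \uM_y$ in the recursion for $\uM_x$ could in principle overshoot the bound, and ruling this out is precisely where the non-negativity assumption on the $m_{y,x}$ (propagated to $\mu$-coefficients $m_{z,x'}$ of $v$) enters via Lemma~\ref{lemmaBx}(a).
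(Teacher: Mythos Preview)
Your proof is correct and follows essentially the same inductive strategy as the paper: write $\uM_x = \uM_{x'}C_s - \sum_{y<x} f_y(0)\,\uM_y$, observe that the $f_y(0)$ are non-negative (so the subtraction cannot increase coefficients), and bound the coefficients coming from $\uM_{x'}C_s$ by twice the inductive bound. The only difference is cosmetic: the paper bounds each individual coefficient of $f_z$ by $2^{l(x)}$, whereas you evaluate at $v=1$ and bound the sum $m_{z,x}(1)$, which is a marginally stronger conclusion obtained by the same mechanism.
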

\textbf{Proof.}
We use  induction on  the length $l(x)$  of $x$. If  $l(x)=0$ then  the only
nonzero polynomial is $m_{x,x} = 1$. Otherwise, let $x = x's$ with $s \in S$
and  $l(x') =  l(x)-1$. In  our construction  of $\uM_x$  we can  start with
$\uM_{x'} = \sum_{y \leq x'} m_{y,x'}  M_y$. By the induction hypothesis the
$m_{y,x'}$ have non-negative coefficients  which are bounded by $2^{l(x')}$.
From the formulae in~\ref{lemmaBx}(a) we see that $\uM_{x'}C_s = \sum_{y\leq
x} f_y M_y$ where all $f_y \in \Z[v]$ are polynomials whose coefficients are
non-negative and bounded by $2\cdot 2^{l(x')} = 2^{l(x)}$ (they are all sums
of two polynomials of the form $v^i m_{y,x'}$).

But  this upper  bound  also holds  for the  coefficients  of the  $m_{y,x}$
because $\uM_x =  \sum_{y\leq x} m_{y,x} M_y  = \uM_{x'} C_s -  \sum_{y < x}
f_y(0)  \uM_y$ (the  $f_y(0) \uM_y$  are linear  combinations of  terms $g_z
M_z$, $z \in W^J$, where the $g_z$ have non-negative coefficients).
\eProof

\section{Application to Lusztig's character formula}\label{s:KLapp}

We sketch an important application of parabolic Kazhdan-Lusztig polynomials.
References and more details can be found in Jantzen's book~\cite{Jan03}.

Let  $\Galg$   be  a  connected   reductive  group   of  rank  $l$   and  of
simply-connected  type  over an  algebraically  closed  field $k$  of  prime
characteristic $p$.

Let $X \cong  \Z^l$ be the weight  lattice of $\Galg$, $\Phi  \subset X$ the
set  of  roots  of  $\Galg$,  and  $\{\omega_1,  \ldots,  \omega_l\}$  be  a
$\Z$-basis of  fundamental weights of  $X$. This  basis determines a  set of
simple roots  of $\Phi$ and this  induces a certain partial  order $\leq$ on
$X$.

The  simple  finite-dimensional  rational  modules  of  $\Galg$ over $k$ are
parameterized by the set of dominant weights $X^+ = \sum_{i=1}^l \Z_{\geq 0}
\omega_i$. For  $\lambda \in X^+$  we denote $L(\lambda)$  the corresponding
simple module and $\chi(L(\lambda))$ its character. In general the dimension
and character of $L(\lambda)$ is not known.

For  each  $\lambda  \in  X^+$   there  is  a  module  $V(\lambda)$,  called
the  \emph{Weyl  module}  of  highest weight  $\lambda$.  The  dimension  of
$V(\lambda)$  is   easy  to  compute   from  $\lambda$  and   the  character
$\chi(V(\lambda))$ can also be computed.

For the rest of this section we assume  that $p$ is at least as large as the
Coxeter number of the root system $\Phi$. The Weyl group $W$ of $\Galg$ acts
faithfully on  $X$, it  has a  set $S$  of Coxeter  generators which  act as
reflections on $X$. Let  $W_p = p\Z\Phi \rtimes W$ be  the affine Weyl group
of $\Galg$. It is also a Coxeter group with a set $S'$ of Coxeter generators
such that $S \subset S'$. The  isomorphism type of the Coxeter system $(W_p,
S')$ does  not depend on  $p$. The affine Weyl  group $W_p$ has  a \emph{dot
action} on $X$  (written $w.\lambda$ for $w \in W_p$,  $\lambda \in X$). The
orbit of $0 \in X$ under $W_p$ is  regular and $w.0$ is dominant if and only
if $w$  is a reduced  right coset representative  of the finite  subgroup $W
\leq W_p$ (that is $x \in W_p^S$ in the notation of Section~\ref{s:KLcomp}).

Let 
\[B = \{ w \in W_p\mid\; w.0 = \sum_{i=1}^l a_i \omega_i \textrm{ with }
0 \leq a_i < p \textrm{ for }i=1,\ldots, l\},\]
the  set of  elements in  $W_p$  which map  $0\in X$  to the  $p$-restricted
region. Modulo the isomorphisms mentioned above between the groups $W_p$ for
all  primes  $p$  the  set  $B$  is independent  of  $p$.  We  have  $|B|  =
|W|/[X:\Z\Phi]$, there is a longest $w' \in  B$ for which $w'.0$ is close to
the Steinberg  weight $(p-1) (\sum_{i=1}^l  \omega_i)$, and we have  $w \leq
w'$ for all $w \in B$ (see~\cite[4.9]{Hum90}).

The  \emph{translation principle}  and  the  \emph{Steinberg tensor  product
theorem}  reduce  the computation  of  $\chi(L(\lambda))$  for all  $\lambda
\in  X^+$ to  the  finite number  of cases  $L(\lambda)$  with $\lambda  \in
\{w.0\mid\; w  \in B\}$ (the  \emph{$p$-restricted} dominant weights  in the
$W_p$-orbit of $0$).

It follows from the \emph{linkage principle}  that for $\lambda \in X^+$ the
character  $\chi(L(\lambda))$  is a  $\Z$-linear  combination  of the  known
characters  $\chi(V(\mu))$  for the  finite  number  of  $\mu \in  X^+  \cap
W_p.\lambda$ with $\mu \leq \lambda$.

For large  enough $p$ this linear  combination can be expressed  in terms of
parabolic Kazhdan-Lusztig polynomials $m_{y,x}(v)$ for the affine Weyl group
$(W_p,S')$ with  respect to  the finite  Weyl group $W  = (W_p)_S$,  that is
$J=S$ in the notation of Section~\ref{s:KLcomp}.

\begin{theorem}[Andersen-Jantzen-Soergel, Fiebig]\label{LuCF}
Fix the  type of  a root  system $\Phi$. For  any prime  $p$ let  $B \subset
W_p$  be  the  set  defined  above.  Then for  almost  all  primes  $p$  the
characters $L(x.0)$ with  $x \in B$ of  a group $\Galg$ over a  field $k$ of
characteristic $p$ and with root system $\Phi$ are given by
\[ \chi(L(x.0)) = \sum_{y\in W_p, y.0 \in X^+} m_{y,x}(-1) \chi(V(y.0)).\]
\end{theorem}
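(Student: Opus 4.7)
The plan is to follow the standard reduction-plus-deep-input strategy, since the theorem is not an elementary consequence of what has been set up so far. First, I would use the \emph{translation principle} to reduce the statement to weights of the form $x.0$ with $x \in B$: once the formula is established for one regular weight in each alcove (equivalently, for each $W_p$-orbit representative in $X^+$), translation functors extend it to all dominant weights in a single orbit, and the Steinberg tensor product theorem handles the non-$p$-restricted case. The \emph{linkage principle} guarantees that $\chi(L(x.0))$ is a finite $\Z$-linear combination of the $\chi(V(y.0))$ with $y.0 \in X^+ \cap W_p.(x.0)$ and $y.0 \leq x.0$; what has to be identified are the explicit coefficients, and the claim is that after the parabolic parametrization of Section~\ref{s:KLcomp} these coefficients coincide with $m_{y,x}(-1)$.

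Second, the main input is the passage from characteristic-$p$ representations of $\Galg$ to a geometric or categorical model whose decomposition numbers are computed by parabolic affine Kazhdan-Lusztig polynomials. Historically this is done in two essentially independent ways. One route, due to Andersen-Jantzen-Soergel, factors through quantum groups at a $p$-th root of unity: for $p$ large the principal block of the restricted quantum group at $\zeta = e^{2\pi i/p}$ is equivalent (after reduction) to the corresponding block for $\Galg$, and the quantum side is computed by the Kazhdan-Lusztig tensor equivalence with negative-level affine Lie algebra modules, where the multiplicities are known to be the affine parabolic $m_{y,x}(-1)$ with $J=S$. The second route, due to Fiebig, replaces the quantum detour by computations on the Braden-MacPherson sheaves on the moment graph of $W_p^S$; this directly yields the formula for all $p$ above an explicit (but large) bound.

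After this input is in place, the actual manipulation reduces to identifying the two parametrizations: the element $x \in B$ produces the dominant $p$-restricted weight $x.0$, and the condition $y.0 \in X^+$ picks out exactly the $y \in W_p^S$ (i.e.\ $y \in W_p^J$ with $J=S$), so that the parabolic Kazhdan-Lusztig polynomials from Definition~\ref{def:KL} really are the correct labels for the coefficients. Specializing Soergel's normalization at $v=-1$ then matches the conventions used on the Weyl module side; this combinatorial comparison is routine once the previous step is granted.

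The hard part, and the reason the theorem is only invoked, is clearly the second step: proving that characteristic-$p$ multiplicities equal $m_{y,x}(-1)$ requires either the full Andersen-Jantzen-Soergel package (with the Kazhdan-Lusztig-Lusztig equivalence at a root of unity and a lift from the quantum group to $\Galg$) or Fiebig's moment graph/sheaf theoretic framework. Everything else in the argument is bookkeeping; the geometric/categorical identification of decomposition numbers with parabolic affine Kazhdan-Lusztig polynomials is the substantial theorem, and it is also what forces the hypothesis that $p$ is large (with the precise bound depending on which proof is used).
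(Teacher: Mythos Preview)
The paper does not prove this theorem at all: it is stated as a known result attributed to Andersen--Jantzen--Soergel and Fiebig, and the paragraph following it (labelled ``Remarks'') merely gives the history and references (\cite{AJS94}, \cite{F12}, and Williamson's counterexamples~\cite{W13}). So there is no ``paper's own proof'' to compare against.

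Your sketch is an accurate high-level description of what actually goes into the cited references, and you correctly identify the hard step as the identification of characteristic-$p$ decomposition numbers with parabolic affine Kazhdan--Lusztig polynomials (via AJS plus the Kazhdan--Lusztig/Kashiwara--Tanisaki equivalence, or via Fiebig's moment-graph approach). Your remark that ``the theorem is only invoked'' is exactly right for this paper; the author is using the formula as a black box for the computational applications in Sections~\ref{s:KLapp}--\ref{s:WallConj}, not attempting to reprove it. In short: nothing is wrong with your proposal, but for the purposes of this paper the appropriate ``proof'' is simply a citation, which is what the author gives.
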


\noindent
\textbf{Remarks.} This formula was  first conjectured by Lusztig~\cite{LuCF}
(with the Coxeter number  of $\Phi$ as an upper bound  for the finite number
of exceptional primes for which the  formula does not hold). The theorem was
first shown  in Andersen,  Jantzen and  Soergel~\cite{AJS94} but  without an
explicit upper bound for the exceptional primes. An explicit (although huge)
bound for  the exceptional  primes in  terms of the  root system  $\Phi$ was
given by Fiebig~\cite{F12}. More recently, Williamson~\cite{W13} showed that
there exist series  of examples of groups $\Galg$ and  primes $p$, where $p$
grows exponentially  in terms of  the Coxeter number  of the root  system of
$\Galg$, such that the formula in the theorem does not hold.

\subsection{Computations.}\label{ss:Computations}
As an  application of the  algorithm described in  Section~\ref{s:KLcomp} we
have  computed  all parabolic  Kazhdan-Lusztig  polynomials  which occur  in
Theorem~\ref{LuCF} for  groups of small  rank. To  indicate the size  of the
computations we list the types, the sizes of the set $B$, and the size of
\[ B_\leq := \{ y \in W_p\mid\; y.0 \in X^+, y.0 \leq x.0 \textrm{ for some }
x \in B\}\]
which is computed with Algorithm~\ref{alg:bruhatmap}. In the rows labeled by
$m$ we give the maximal length of an element in $B_\leq$ (these lengths were
also determined in~\cite{Boe98}).
\begin{center}
\begin{tabular}{l|ccccccc}
\hline
Type & $A_2$ & $A_3$ & $A_4$ & $A_5$ & $A_6$ & $A_7$ & $A_8$ \\
$|B|$ & 2 & 6 & 24 & 120 & 720 & 5040 & 40320 \\
$|B_\leq|$ & 2 & 8 & 52 & 478 & 5706 & 83824 & 1461944 \\
$m$& 1 & 4 & 10 & 20 & 35 & 56 & 84 \\
\hline
\end{tabular}

\medskip
\begin{tabular}{l|ccccccccc}
\hline
Type & $B_2$ & $B_3$ & $B_4$ & $B_5$ & $B_6$ & $C_3$ & 
$C_4$ & $C_5$ & $C_6$ \\
$|B|$ & 4 & 24 & 192 & 1920 & 23040 & 24 & 192 & 1920 & 23040 \\
$|B_\leq|$ & 4 & 44 & 756 & 17332 & 493884 &
46 & 792 & 17958 & 504812\\
$m$ & 3 & 13 & 34 & 70 & 125 & 13 & 34 & 70 & 125 \\
\hline
\end{tabular}

\medskip
\begin{tabular}{l|cccccccccc}
\hline
Type & $D_4$ & $D_5$ & $D_6$ & $G_2$ & $F_4$ & $E_6$ \\
$|B|$ & 48 & 480 & 5760 & 12 & 1152 & 17280 \\
$|B_\leq|$ & 142 & 3428 & 102142 & 16 & 7832 & 437230 \\
$m$ & 16 & 40 & 80 & 10 & 86 & 120 \\
\hline
\end{tabular}
\end{center}

The next  case $E_7$ is  much bigger and  the computations of  the parabolic
Kazhdan-Lusztig polynomials could not yet be  finished, in this case we have
$|B|=1451520$, $|B_\leq|=139734574$, $m = 336$.

We used a  prototype implementation in \GAP~\cite{GAP4.7.6} and  C for these
computations. For example,  the case of type $F_4$ needs  about 6~seconds of
CPU time (on a current desktop computer), while the case of type $E_6$ needs
about 20~days of CPU time (with parallel processes we could finish this case
in about 3~days) and the result needs about 400~GB of disk storage.

We give another example for the performance of the algorithm: Computing  all
ordinary  Kazhdan-Lusztig  polynomials for  finite  Weyl  groups $W$  ($J  =
\emptyset$)  is  also  possible,  for  example for  type  $F_4$  this  takes
0.2~seconds and for type $E_6$ about 16~minutes.

\section{First cohomology and Guralnick's conjecture}\label{s:GurConj}

Let  $k$ be  a  field, $H$  be  a group  and $M$  be  a $kH$-module.  Recall
that  the first  cohomology $H^1(H,M)$  is defined  as the  quotient of  the
$k$-vectorspace of maps
\[\{f:H\to M\mid\; f(hh')=f(h)+hf(h') \textrm{ for all }h,h'\in H\}\]
modulo the subspace $\{f:H\to M, h\mapsto hm-m\mid\; m \in M\}$.

We  will  use  two  interpretations   of  $H^1(H,M)$.  First,  it  yields  a
parameterization  of extensions  of $M$  with  the trivial  module, that  is
$H^1(H,M) \cong {\Ext}^1_G(k, M)$.

For  a second  interpretation consider  $G =  M \rtimes  H$, the  semidirect
product where  the action of $H$  on $M$ is  given by the module  action. We
have the following lemma, see~\cite[Prop. 3.7.2]{BenCohI} for a proof.

\begin{lemma}\label{ComplClasses}
Let $k$, $H$, $M$,  $G = M \rtimes H$ as before.  The elements of $H^1(H,M)$
parameterize the conjugacy classes of complements of the normal subgroup $M$
in $G$.
\end{lemma}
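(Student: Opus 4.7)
The plan is to make the correspondence with $H^1(H,M)$ explicit by parameterizing complements of $M$ in $G$ by set-theoretic sections of the projection $G \to H$ and then tracking how these sections transform under conjugation.

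First, I would argue that since $G = MH$ with $M \cap H = 1$ and $M$ normal, every complement $K$ of $M$ meets each coset $Mh$ in exactly one element, so
\[ K = \{f(h)h \mid h \in H\} \]
for a unique function $f \colon H \to M$. Multiplying two such elements in the semidirect product gives $(f(h)h)(f(h')h') = (f(h) + hf(h'))(hh')$ in additive module notation, so $K$ is a subgroup precisely when $f(hh') = f(h) + hf(h')$, i.e.\ $f \in Z^1(H,M)$. This should produce a bijection between complements of $M$ in $G$ and derivations from $H$ to $M$.

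Next, I would compute how conjugation by a generic $g = m_0 h_0 \in G$ (uniquely decomposed) modifies the associated derivation. A direct expansion in the semidirect product gives
\[ m_0 (f(h) h) m_0^{-1} = (f(h) + m_0 - h m_0)\, h, \]
so conjugation by $m_0$ sends $f$ to $f - d(m_0)$, where $d(m_0) \colon h \mapsto hm_0 - m_0$ is the inner derivation associated to $m_0$. For $h_0$-conjugation one computes $h_0 (f(h) h) h_0^{-1} = (h_0 f(h))\, h'$ with $h' = h_0 h h_0^{-1}$; substituting $h = h_0^{-1} h' h_0$ and expanding $f(h_0^{-1} h' h_0)$ twice via the cocycle identity, while using the consequence $h_0 f(h_0^{-1}) = -f(h_0)$ of $f(1) = 0$, shows the new derivation equals $f + d(f(h_0))$. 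In both cases conjugation changes $f$ only by an inner derivation.

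Combining these steps should show that cohomologous cocycles correspond to $G$-conjugate complements, and conversely every $G$-conjugation decomposes into an $m_0$- and an $h_0$-conjugation and so modifies $f$ by an element of $B^1(H,M)$. Passing to quotients, the map $f \mapsto \{f(h)h \mid h \in H\}$ then induces the desired bijection between $H^1(H,M)$ and the set of $G$-conjugacy classes of complements of $M$. The main obstacle will be the $h_0$-conjugation identity, which requires two applications of the derivation axiom together with the consequence $h_0 f(h_0^{-1}) = -f(h_0)$ of $f(1)=0$; everything else is direct bookkeeping in the semidirect product.
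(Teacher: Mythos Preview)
Your argument is correct and is precisely the standard proof of this classical fact: complements correspond bijectively to $1$-cocycles via sections, and $G$-conjugacy translates into modification by coboundaries. The computation for $h_0$-conjugation is handled correctly; the identity $h_0 f(h_0^{-1}) = -f(h_0)$ indeed follows from $f(1)=0$, and two applications of the cocycle relation give $g = f + d(f(h_0))$ as you claim.

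The paper itself does not prove this lemma at all: it simply states it and refers to \cite[Prop.~3.7.2]{BenCohI}. So there is no ``paper's own proof'' to compare against beyond the citation. What you have written is essentially the argument one finds in that reference (and in most textbooks on group cohomology), so your proposal is entirely adequate and in fact supplies more detail than the paper does.
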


From now  we consider finite  groups $H$ and  finite-dimensional irreducible
$kH$-modules $M$. We  mention two non-trivial results on  bounds for $\dim_k
H^1(H,M)$. First, the next theorem gives a general bound.

\begin{theorem}[Guralnick-Hoffman~{\cite[Thm. 1]{GH98}}]\label{GHbound} 
Let $H$ be  a finite group, $k$  a field, and $M$ be  a faithful irreducible
$kH$-module. Then
\[ \dim_k H^1(H,M) \leq \frac{1}{2} \dim_k M.\]
\end{theorem}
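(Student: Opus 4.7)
The plan is to combine the complement interpretation (Lemma~\ref{ComplClasses}) with a counting argument in the semidirect product $G = M \rtimes H$.

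After reducing to $k$ finite with $|k|=q$ by standard descent arguments ($H^1$ is well behaved under extension of scalars and a faithful irreducible $kH$-module can be replaced by a form over a finite subfield without changing the relevant dimensions), I would set $G = M \rtimes H$. The case $H = 1$ is vacuous because faithfulness forces $M = 0$, so assume $H \neq 1$; then irreducibility together with faithfulness gives $M^H = 0$. By Lemma~\ref{ComplClasses} the complements of $M$ in $G$ fall into exactly $q^d$ conjugacy classes, where $d = \dim_k H^1(H, M)$. A direct calculation in the semidirect product shows $N_G(K) \cap M = M^K = M^H = 0$ for every complement $K$, so each class has $|M|$ members, and the total number of complements is $N = q^d \cdot |M|$.

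The crucial structural observation is that any two distinct complements $K_1, K_2$ generate $G$. Writing $K_i = \{(f_i(h), h) : h \in H\}$ for derivations $f_1 \neq f_2$, the product $(f_1(h),h)(f_2(h),h)^{-1} = (f_1(h) - f_2(h), 1)$ lies in $\langle K_1, K_2 \rangle \cap M$ for every $h \in H$, and conjugation by $K_1$ sends $(f_1(h)-f_2(h),1)$ to $(h'(f_1(h)-f_2(h)),1)$ for $h' \in H$. Hence $\langle K_1, K_2 \rangle \cap M$ is a nonzero $H$-submodule of $M$, forced by irreducibility to equal $M$; therefore $\langle K_1, K_2 \rangle \supseteq M K_1 = G$.

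I would then count ordered pairs $(K_0, K)$ of distinct complements, all of which generate $G$ by the previous step: for fixed $K_0$, any such $K$ must lie outside every maximal subgroup of $G$ containing $K_0$. Controlling the distribution of complements among the preimages of maximal subgroups of $H$ under the projection $G \to G/M \cong H$, as in Guralnick--Hoffman~\cite{GH98}, yields the inequality $q^d \leq q^{\dim_k M / 2}$, and hence $d \leq \tfrac{1}{2}\dim_k M$.

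The main obstacle is this last step. The 2-generation property alone only yields the cruder Aschbacher--Guralnick bound $\dim_k H^1(H, M) \leq \dim_k M$; extracting the sharp factor $\tfrac12$ requires the delicate combinatorial bookkeeping on complements contained in maximal overgroups of a fixed complement $K_0$, which is the technical heart of~\cite{GH98} and which I would follow rather than reinvent.
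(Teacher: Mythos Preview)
The paper does not prove Theorem~\ref{GHbound}; it is quoted as an external result of Guralnick--Hoffman~\cite{GH98} and used only as a black box (see Remark~(b) after Theorem~\ref{LPSWall} and Remark~(d) after Theorem~\ref{maxdimH}). There is therefore no proof in the paper to compare your proposal against.

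That said, your sketch does trace the genuine Guralnick--Hoffman strategy: reduce to a finite field, interpret $H^1$ via complements in $G = M \rtimes H$ (Lemma~\ref{ComplClasses}), show that any two distinct complements generate $G$ using irreducibility, and then count. You are explicit that this alone only recovers the Aschbacher--Guralnick bound $\dim_k H^1(H,M) \leq \dim_k M$, and that the sharpening to $\tfrac12\dim_k M$ is the technical content of~\cite{GH98} which you would cite rather than reproduce. As an outline this is accurate and appropriately honest about where the real work lies.

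Two small corrections to your write-up. First, the case $H=1$ is not handled by ``faithfulness forces $M=0$''; rather any module is faithful for the trivial group, but $H^1(1,M)=0$ automatically, so the bound is trivial. Second, in your generation argument the intersection $\langle K_1,K_2\rangle \cap M$ is a priori only an $\F_pH$-submodule of $M$, not a $kH$-submodule, so you need either to reduce to $k=\F_p$ beforehand (the inequality scales correctly under this reduction) or to argue separately that the $\F_pH$-span already exhausts $M$. Neither point affects the overall soundness of the approach.
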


In special cases much better bounds are known, for example:

\begin{theorem}[Guralnick-Tiep~{\cite[Thm. 1.3]{GT11}}]\label{GTbound}
Let $\F_q$ be  a finite field with  $q$ elements and $H(q)$  a finite simple
Chevalley group of twisted rank $e$ over  $\F_q$ with Weyl group $W$ ($e$ is
at most the rank $l$ of the ambient  algebraic group). Let $k$ be a field of
characteristic different from the characteristic  of $\F_q$. Then the number
of irreducible $kH(q)$-modules $M$ with non-trivial $H^1(H(q),M)$ is bounded
in terms of $|W|$ and for all such $M$
\[ \dim_k H^1(H(q), M) \leq |W| + e.\]
\end{theorem}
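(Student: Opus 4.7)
The plan is to combine cross-characteristic modular representation theory of finite groups of Lie type with block-theoretic reductions. Since $H^1$ is insensitive to enlargement of the coefficient field, I may take $k = \overline{\F_\ell}$ for a prime $\ell \neq \operatorname{char}\Fq$; the cases $\ell = 0$ or $\ell \nmid |H(q)|$ are trivial because $kH(q)$ is then semisimple by Maschke's theorem.

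The first step is to restrict attention to the principal $\ell$-block $B_0$ of $kH(q)$. Since $H^1(H(q),M) = \Ext^1_{kH(q)}(k,M)$ and $\Ext^1$ vanishes between modules in distinct blocks, any simple $M$ contributing to the count or to the bound must lie in $B_0$. The cross-characteristic theory of unipotent blocks---Fong--Srinivasan for classical types, refined by Brou\'e--Michel and Bonnaf\'e--Rouquier in general---parameterizes the simple modules in $B_0$ by combinatorial data governed by the Weyl group $W$, and in particular their total number is bounded in terms of $|W|$, uniformly in $q$. This establishes the first (finiteness) assertion.

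For the bound $\dim_k H^1(H(q),M) \leq |W|+e$, I would use a Harish-Chandra strategy. Every simple $M$ in $B_0$ lies in a Harish-Chandra series: it is a composition factor of a parabolic induction from a cuspidal simple of a proper Levi subgroup $L$ whenever $M$ is not itself cuspidal. Relating $H^1(H(q),M)$ to the cohomology of $L$ with coefficients in the cuspidal data, for instance through a Hochschild--Serre spectral sequence for the parabolic and Frobenius reciprocity, allows one to argue by induction on the semisimple rank. For the cuspidal base case, $H^1$ is controlled by the normalizer of a Sylow $\ell$-subgroup and by the Hecke algebra of the principal series, both of which admit a combinatorial description in terms of subgroups of $W$. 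The $|W|$ term would accumulate contributions from Weyl subgroups of the Levi factors along the induction, while the $+e$ correction would reflect the split rank contribution at the cuspidal level.

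The main obstacles are twofold. First, obtaining genuine uniformity in $q$ requires that decomposition numbers in $B_0$ be bounded independently of $q$; this is known through the canonical basic set theorems of Geck--Rouquier, but invoking it correctly requires care. Second, isolating the exact additive constant $+e$ is delicate in twisted cases, where the distinguished generators of the untwisted Weyl group do not all correspond to reflections of twisted rank; here one must follow the Guralnick--Tiep argument case-by-case through the Suzuki, Ree, and small-rank exceptional groups, for which specific character-theoretic and block-theoretic arguments replace the generic Harish-Chandra reduction sketched above.
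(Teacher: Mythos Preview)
The paper does not contain a proof of this theorem at all: it is quoted as an external result of Guralnick and Tiep, with a direct citation to their paper, and is used only as background context for the discussion of Conjecture~\ref{GConj}. There is therefore no ``paper's own proof'' to compare your proposal against.

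As to the proposal itself: what you have written is a plausible high-level outline of the kind of machinery that goes into cross-characteristic results of this type (reduction to the principal block, Harish-Chandra induction, uniform bounds via Geck--Rouquier basic sets), but it is not a proof. Several of the steps you describe are stated as intentions rather than arguments: you say that the Hochschild--Serre/Frobenius reciprocity step ``allows one to argue by induction'' without actually controlling the terms, and you acknowledge explicitly that isolating the precise constant $|W|+e$ requires following the original Guralnick--Tiep case analysis. A sketch that ultimately defers the sharp bound back to the cited paper is not an independent proof. If you genuinely wish to reproduce the result, you would need to carry out the inductive bookkeeping quantitatively and handle the cuspidal base cases in detail; as written, the proposal identifies the relevant ingredients but does not close the argument.
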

Note that for fixed  $W$ and $e$ with growing $q$  the number of irreducible
$kH(q)$-modules and their  dimensions also grow, while the  given bounds are
independent of $q$.

For   a    long   time    all   explicitly    known   examples    of   first
cohomology   had   very   small    dimension.   Guralnick   formulated   the
following   conjecture,   see~\cite[Conj.~2]{G86}  and~\cite[Conj.~2]{GH98}.
\begin{conj}[Guralnick]\label{GConj} There exists a global constant $C$ such
that for  any finite  group $H$,  field $k$, and  $M$ a  finite dimensional,
absolutely irreducible, faithful $kH$-module we have
\[  \dim_k H^1(H,M) \leq C. \]
\end{conj}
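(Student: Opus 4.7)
The plan is to attempt this conjecture by reducing to the case of finite quasi-simple groups via standard tools (the Lyndon--Hochschild--Serre spectral sequence together with Clifford theory applied to a minimal normal subgroup), and then to treat the simple groups case by case through the classification of finite simple groups. For alternating and sporadic groups, direct computation or tabulation should suffice to give a bound. For groups of Lie type in non-defining characteristic, Theorem~\ref{GTbound} already yields the bound $|W|+e$ in each fixed Lie type; one would still need uniformity across types, but this looks tractable. The truly serious case is a finite group of Lie type $H=\Galg(\Fq)$ acting on a simple module $L(\lambda)$ in the defining characteristic.

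For such modules, I would compare $H^1(H,L(\lambda))$ with the rational cohomology $\Ext^1_{\Galg}(k,L(\lambda))$ of the ambient algebraic group using generic cohomology (Cline--Parshall--Scott--van der Kallen), up to an error bounded in terms of the rank alone. Rational $\Ext^1_{\Galg}(k,L(\lambda))$ in turn can be extracted from the composition series of the Weyl modules $V(\mu)$: if $L(\lambda)$ appears as a suitable adjacent composition factor of some $V(\mu)$, this contributes to $\Ext^1$. By Theorem~\ref{LuCF}, for primes $p$ above an explicit bound these composition multiplicities are values $m_{y,x}(-1)$ of the parabolic Kazhdan--Lusztig polynomials for the affine Weyl group $W_p$ with respect to its finite parabolic $W$. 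The remaining small primes would have to be handled by ad hoc methods or by invoking Fiebig's explicit bound.

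At this stage, proving the conjecture has been reduced to a combinatorial question: are the coefficients of the relevant polynomials $m_{y,x}$ uniformly bounded, independently of the type and rank of $\Galg$ and of the weights involved? The main obstacle is precisely this step, and it is the reason the author suspects the conjecture is false. The computations summarised in Section~\ref{ss:Computations}, produced by Algorithm~\ref{alguMx} together with the data structure from Algorithm~\ref{alg:bruhatmap}, exhibit Kazhdan--Lusztig coefficients that grow substantially with rank, so the proposed reduction turns the hoped-for uniform bound into a combinatorial assertion for which the explicit numerical evidence points in the opposite direction. Any genuine proof of Guralnick's conjecture would therefore either have to bypass the cohomological route through Lusztig's character formula altogether, or else produce structural control over these polynomials that the current computations strongly suggest is not available.
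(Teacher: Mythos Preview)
The statement you were asked to prove is not a theorem but an open conjecture (Conjecture~\ref{GConj}), and the paper makes no attempt to prove it. On the contrary, the paper's explicit position is that the conjecture ``seems unlikely to hold'': Theorem~\ref{maxdimH} exhibits first cohomology groups of dimension up to $3537142$ (for type $E_6$), and Remark~(d) following that theorem notes that while a finite list of examples cannot disprove the conjecture, the growth pattern suggests there is no global bound $C$.

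Your proposal is therefore not in conflict with the paper, but neither is it a proof. You sketch a reduction to quasi-simple groups and then to Kazhdan--Lusztig combinatorics, and you yourself conclude that the final step---a uniform bound on the relevant $m_{y,x}$ coefficients---is contradicted by the numerical data of Section~\ref{ss:Computations}. That is an accurate reading of the situation, but it means your write-up is an explanation of why the obvious strategy fails, not a proof of the statement. There is no ``paper's own proof'' to compare against: both you and the paper arrive at the same endpoint, namely that the conjecture remains open and that the defining-characteristic case for groups of Lie type, governed by parabolic Kazhdan--Lusztig polynomials via Theorems~\ref{LuCF} and~\ref{thm:KLextdim}, is where the evidence points toward failure.
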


It  was even  asked  if $C$  could  be  $2$. But  then  examples with  $H^1$
of  dimension  $3$ were  discovered  by  Scott~\cite{Scott03} and  Bray  and
Wilson~\cite{BW08}.  The  examples by  Scott  involved  the  computation  of
Kazhdan-Lusztig  polynomials  for  the  group $\SL_6(\bar\F_p)$  and  yields
examples for infinitely many finite  subgroups. Using the same technique for
$\SL_7(\bar\F_p)$ Scott and Sprowl even found examples of $H^1$ of dimension
$5$ (unpublished, in 2012).

\subsection{Connection of  {\boldmath $\dim H^1(H,M)$}  with Kazhdan-Lusztig
polynomials.}
The   mentioned   examples      by   Scott et~al.\   use   the   fact   that
certain  coefficients  of  the   Kazhdan-Lusztig  polynomials  described  in
Section~\ref{s:KLapp} have an interpretation  as $\dim H^1(H,M)$. To explain
this in more detail we use  the setup from Section~\ref{s:KLapp}. If $\Galg$
is a simply-connected reductive group  in characteristic $p$, defined over a
finite  field  $\Fq$,  we  consider  as finite  group  $H  =  \Galg(q)$  the
corresponding finite subgroup  of $\Fq$-rational points in  $\Galg$, and $M$
is  the restriction  of  a simple  module $L(\lambda)$  of  $\Galg$ to  $H$,
where  $\lambda$ is  a  dominant  $p$-restricted weight.  Note  that $M$  is
by~\cite[1.3]{St63} a simple module for the finite group $H$.

The  connection  between  Kazhdan-Lusztig   polynomials  and  dimensions  of
first  cohomology  groups is  established  by  combining the  following  two
theorems. The  first is  on algebraic  groups and we  use the  notation from
Section~\ref{s:KLapp}, see~\cite{And86} or~\cite[App C.2, C.10]{Jan03}.

\begin{theorem}[Andersen]\label{thm:KLextdim}
Let $\Galg$ be a connected simply-connected algebraic group over a field $k$
of characteristic $p$  where $p$ is a prime such  that the character formula
in Theorem~\ref{LuCF} holds. Let $W$ be  the Weyl group of $\Galg$ and $W_p$
the corresponding affine Weyl group. Let $x,  y \in W_p$ such that $x.0$ and
$y.0$ are dominant and $x.0$ is $p$-restricted. Then the coefficients of the
Kazhdan-Lusztig polynomial $m_{y,x}(v)$ can be interpreted as follows.

\[ m_{y,x}(v) = \sum_{i > 0} 
\dim {\Ext}^i_\Galg(V(y.0), L(x.0)) v^i.\]
\end{theorem}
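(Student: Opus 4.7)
My strategy is to match the $v^i$-coefficient of $m_{y,x}(v)$ with the multiplicity of $L(x.0)$ in the $i$-th subquotient of Jantzen's filtration of $V(y.0)$, and then to identify this multiplicity with $\dim_k\Ext^i_\Galg(V(y.0),L(x.0))$ by exploiting a parity vanishing that is available precisely under the hypothesis on $p$.

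As a first orientation, invert the character formula of Theorem~\ref{LuCF} on the $\Z$-span of the $\chi(L(z.0))$ to express $\chi(V(y.0))$ as an integral combination of simple characters. Specialising at $v=1$ would then identify the composition multiplicity $[V(y.0):L(x.0)]$ with $m_{y,x}(1)$ and, via the Euler characteristic $\sum_i(-1)^i\dim\Ext^i$ together with the standard $\Ext^\bullet$-vanishing between two Weyl modules, would recover the claimed identity evaluated at $v=1$. This reduces the task to explaining how the signed specialisation $m_{y,x}(1)$ is refined to a polynomial identity degree by degree.

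Second, I would use Jantzen's filtration $V(y.0) = V^0(y.0) \supset V^1(y.0) \supset \cdots$, whose head is $L(y.0)$, and denote by $\ell_i(y,x)$ the multiplicity of $L(x.0)$ in $V^i/V^{i+1}$. The Jantzen sum formula expresses $\sum_{i\geq 1}\chi(V^i(y.0))$ as a non-negative integral combination of characters of Weyl modules of strictly smaller highest weight. Proceeding by induction on $l(y)$ and translating to a regular block, I would check that this sum formula matches, term by term, the $C_s$-action on $\uM_y$ from Lemma~\ref{lemmaBx}(a) after the recursion $y = y' s$; this identifies $\ell_i(y,x) = [v^i]\,m_{y,x}(v)$ combinatorially, independently of any homological data.

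The third and hardest step is to lift this layer-wise identification to an identification with $\dim_k\Ext^i_\Galg(V(y.0),L(x.0))$. Here one needs, first, the parity vanishing $\Ext^i_\Galg(V(y.0),L(x.0)) = 0$ for $i\not\equiv l(y)-l(x)\pmod 2$, which is consistent with the parity property of the Kazhdan-Lusztig coefficients noted in Remark~\ref{rem:KLalg}(c); and, second, a spectral sequence converging to $\Ext^\bullet_\Galg(V(y.0),L(x.0))$ whose $E_1$-page reads off $\Hom_\Galg(V^i/V^{i+1},L(x.0))$. Parity vanishing forces this spectral sequence to degenerate at $E_1$, so that $\dim\Ext^i = \ell_i$. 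The main obstacle is exactly this third step: both the parity vanishing and the degeneration depend delicately on the hypothesis that Lusztig's character formula holds at $p$, and extracting actual $\Ext^i$ dimensions (rather than Euler characteristics or subquotients of a non-degenerate spectral sequence) is where the bulk of the Andersen-Jantzen-Soergel technology is expended.
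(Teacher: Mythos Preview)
The paper does not give a proof of this theorem: it is attributed to Andersen with the references \cite{And86} and \cite[App.~C.2, C.10]{Jan03}, and nothing further is argued in the text. So there is no ``paper's own proof'' to compare your proposal against; the result is quoted from the literature.

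That said, your sketch has a genuine gap in Step~2. The Jantzen sum formula only determines the \emph{sum} $\sum_{i\geq 1}\chi(V^i(y.0))$, not the individual layer characters $\chi(V^i/V^{i+1})$. You cannot extract the multiplicities $\ell_i(y,x)$ ``term by term'' from it, and the claim that matching the sum formula against the $C_s$-recursion on $\uM_y$ identifies $\ell_i(y,x)=[v^i]\,m_{y,x}(v)$ ``combinatorially, independently of any homological data'' is not correct: any decomposition of the sum into layers uses exactly the kind of structural input (Lusztig's conjecture, parity, rigidity of Weyl modules) that you defer to Step~3. In the literature this identification of filtration layers with Kazhdan--Lusztig coefficients is itself a nontrivial theorem proved \emph{using} the character formula, not a preliminary to it.

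The route actually taken in \cite{And86} and \cite[C.2]{Jan03} is closer to the reverse of your ordering: one first establishes, by induction through translation functors and assuming Lusztig's formula, the parity vanishing $\Ext^i_\Galg(V(y.0),L(x.0))=0$ for $i\not\equiv l(x)-l(y)\pmod 2$; then the Euler-characteristic identity (your Step~1) together with a long exact sequence coming from the recursion $\uM_x=\uM_{x'}C_s-\sum f_y(0)\uM_y$ pins down each $\dim\Ext^i$ separately. The Jantzen filtration is not the load-bearing ingredient. Your Steps~1 and~3 are in the right spirit; the fix is to drop the claimed combinatorial extraction in Step~2 and instead run the induction directly on the $\Ext$-dimensions, using translation functors to propagate parity vanishing.
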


In  particular, for  $y=1$,  that is  $y.0=0$, and $V(y.0)$  is the  trivial
module,  we   see  that  $\dim   H^1(\Galg,  L(x.0))$  is   the  coefficient
at  $v$  of  the   Kazhdan-Lusztig  polynomial  $m_{1,x}(v)$.  If  $\lambda$
is  a  $p$-restricted  weight  which  is  not  in  the  $W_p$-orbit  of  the
$0$-weight, then  $\dim H^1(\Galg,  L(x.0)) = 0$  by the  linkage principle,
see~\cite[II.6]{Jan03}.

Note that  in the  last theorem  the notions  of $H^1$  and ${\Ext}^i_\Galg$
are  with  respect to  homomorphisms  of algebraic  groups  (not  just  with
respect to  abstract groups as  in the definition above).  Nevertheless, the
${\Ext}$-spaces for  algebraic groups can  be related to  ${\Ext}$-spaces of
finite groups. For example, a  special case of~\cite[Thm. 7.5]{BNP01} yields
for large enough $p$ also dimensions of $H^1$ for finite groups.

\begin{theorem}[Bendel, Nakano, Pillen]
In the setup  of the previous theorem  let $p \geq 5(h-1)$ where  $h$ is the
Coxeter number  of $W$. We  assume that $\Galg$  is defined over  $\F_q$ and
write $H = \Galg(q)$. Then
\[ H^1(H,L(x.0)) \cong H^1(\Galg, L(x.0)). \]
\end{theorem}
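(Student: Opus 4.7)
The plan is to pass between finite-group cohomology and rational cohomology of $\Galg$ via an intermediate object, the cohomology of the $r$-th Frobenius kernel $\Galg_r \subseteq \Galg$, where $q = p^r$. Under the hypothesis $p \geq 5(h-1)$, I would show that two separate Lyndon-Hochschild-Serre spectral sequences collapse in the relevant range, each yielding an isomorphism with $H^1(\Galg_r, L(x.0))^{\Galg(q)}$; composing them gives the asserted identification.

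For the algebraic side, I would consider the spectral sequence for the normal subgroup $\Galg_r \trianglelefteq \Galg$,
\[
E_2^{i,j} = H^i(\Galg/\Galg_r,\, H^j(\Galg_r, L(x.0))) \Longrightarrow H^{i+j}(\Galg, L(x.0)).
\]
Using the Frobenius identification $\Galg/\Galg_r \cong \Galg$ (with coefficients twisted by the $r$-th power Frobenius), the edge-map analysis in total degree $1$ reduces the desired comparison $H^1(\Galg, L(x.0)) \cong H^1(\Galg_r, L(x.0))^{\Galg}$ to the vanishing of the correction term $E_2^{1,0} = H^1(\Galg, L(x.0)^{(r)})$. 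The linkage and translation principles confine any nonzero contribution to a very restricted set of weights, and the bound $p \geq 5(h-1)$ is chosen precisely so that no such weight occurs.

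For the finite side I would follow the argument of~\cite{BNP01}: the natural map
\[
H^1(H, L(x.0)) \longrightarrow H^1(\Galg_r, L(x.0))^{H}
\]
arising from the Lyndon-Hochschild-Serre spectral sequence for $\Galg_r \cap H \trianglelefteq H$ is shown to be an isomorphism once a parallel set of low-degree $\Ext$-vanishings is verified. Since the $H$-action and the $\Galg(q)$-action on $H^j(\Galg_r, L(x.0))$ both come from conjugation by elements of $H \subseteq \Galg(q)$, the two fixed-point spaces coincide, and composing the two isomorphisms yields the result.

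The main obstacle is verifying that the correction terms in both spectral sequences genuinely vanish under the explicit bound $p \geq 5(h-1)$. This rests on Andersen's interpretation of $\Ext^i_\Galg$-dimensions via parabolic Kazhdan-Lusztig polynomials (Theorem~\ref{thm:KLextdim}), combined with the linkage principle and translation functors which confine the potentially contributing weights to a narrow region near the walls of the fundamental alcove. The constant $5(h-1)$ is exactly what is needed to keep those weights far enough from the walls that the relevant $H^1$- and $H^0$-groups vanish, forcing both spectral sequences to degenerate as required.
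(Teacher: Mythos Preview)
The paper does not give a proof of this statement at all; it simply records it as a special case of~\cite[Thm.~7.5]{BNP01} and refers the reader there. So there is nothing in the paper to compare your argument against, and your task was really to outline the Bendel--Nakano--Pillen proof.

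Your sketch of the algebraic side is reasonable: the Lyndon--Hochschild--Serre spectral sequence for the normal subgroup scheme $\Galg_r \trianglelefteq \Galg$ does give, after checking that the relevant $E_2^{1,0}$-term vanishes, an identification $H^1(\Galg, L(x.0)) \cong H^1(\Galg_r, L(x.0))^{\Galg}$. However, the finite side of your argument does not work as written. You invoke ``the Lyndon--Hochschild--Serre spectral sequence for $\Galg_r \cap H \trianglelefteq H$,'' but $\Galg_r$ is an infinitesimal group scheme and $H = \Galg(q)$ is a reduced finite group, so $\Galg_r \cap H$ is trivial; there is no such spectral sequence, and in particular no natural restriction map $H^1(H, L(x.0)) \to H^1(\Galg_r, L(x.0))$. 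What Bendel--Nakano--Pillen actually do is pass through the group scheme $\Galg_r \Galg(q)$ (in which $\Galg_r$ is normal with quotient $\Galg(q)$), or equivalently use Frobenius reciprocity to rewrite $H^1(\Galg(q), L(x.0))$ as a rational $\Ext$-group for $\Galg$ with coefficients in an induced module, and then exploit the filtration of that induced module. This is a genuinely different mechanism from the one you describe.

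There is also a smaller but important point: you say the required vanishing ``rests on Andersen's interpretation of $\Ext^i_\Galg$-dimensions via parabolic Kazhdan--Lusztig polynomials (Theorem~\ref{thm:KLextdim}).'' That theorem presupposes the Lusztig character formula and cannot be used to establish the bound $p \geq 5(h-1)$; the BNP vanishing arguments are independent of Lusztig's conjecture and rely instead on direct weight estimates, the linkage principle, and Kostant-type results. Invoking Theorem~\ref{thm:KLextdim} here would make the argument circular in the context of the present paper.
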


(The statement  of~\cite[Thm. 7.5]{BNP01}  also addresses  higher cohomology
and other weights, the statement becomes more complicated for weights not in
the $W_p$-orbit of $0$.)

The examples of  $H^1$ of dimensions $3$ and $5$  by Scott et.al.\ mentioned
above came  from Kazhdan-Lusztig polynomials  for $\Galg$ of type  $A_5$ and
$A_6$, respectively.

In   view   of   the   Conjecture~\ref{GConj}   it   was   a   surprise   to
find   from   the   computed  Kazhdan-Lusztig   polynomials   described   in
Section~\ref{ss:Computations} examples of $H^1$ of much higher dimension. We
collect some interesting cases in the next theorem.

The coefficient of the Kazhdan-Lusztig polynomial in type $A_8$ mentioned in
the following theorem was independently computed by Scott and Sprowl
in~\cite{SS16} (more precisely, its congruence class modulo $2^{64}$).

\begin{theorem}\label{maxdimH}
Let $\Galg$  be defined  over $\F_q$  and the  characteristic $p$  of $\F_q$
be  at  least  $5(h-1)$,  where  $h$  is the  Coxeter  number  of  the  root
system  of $\Galg$,  and assume  that  in characteristic  $p$ the  character
formula~\ref{LuCF}  holds for  $\Galg$.  Let $\lambda$  be a  $p$-restricted
weight for $\Galg$ for which $\dim H^1(\Galg(q),L(\lambda))$ becomes maximal.

The following table shows $\dim H^1(\Galg(q),L(\lambda))$ for $\Galg$ simple
of small  rank (for smaller rank  simple groups all $H^1$  are of dimensions
$0$ or $1$):
\begin{center}
\begin{tabular}{l|ccccccc}
\hline
Type of $\Galg$ & $A_5$ & $A_6$ & $A_7$ & $A_8$ & $B_4$ & $B_5$ & $B_6$\\
$\dim H^1(\Galg(q),L(\lambda))$ & 3 & 16 & 469 & 36672 & 4 & 387 & 383868 \\
\hline
\end{tabular}
\end{center}

\begin{center}
\begin{tabular}{l|ccccccc}
\hline
Type of $\Galg$ & $C_5$ & $C_6$ & $D_4$ & $D_5$ & $D_6$ & $F_4$ & $E_6$\\
$\dim H^1(\Galg(q),L(\lambda))$ & 
                  130 & 259810 & 2 & 60 & 9504 & 882 & 3537142 \\
\hline
\end{tabular}
\end{center}
\end{theorem}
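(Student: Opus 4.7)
The plan is to reduce the statement to a direct reading of coefficients from the already-computed parabolic Kazhdan-Lusztig polynomials listed in Section~\ref{ss:Computations}, and then to account for which weights can actually contribute to nonzero first cohomology.

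First I would observe that since $\lambda$ is assumed to be $p$-restricted and since by the linkage principle (see~\cite[II.6]{Jan03}) any simple module $L(\lambda)$ with $H^1(\Galg,L(\lambda)) \neq 0$ must have $\lambda$ in the $W_p$-orbit of~$0$, it suffices to consider weights of the form $\lambda = x.0$ for $x \in B$, where $B$ is the set from Section~\ref{s:KLapp}. Thus maximizing $\dim H^1(\Galg(q),L(\lambda))$ over all $p$-restricted $\lambda$ reduces to maximizing over the finite set $\{x.0 \mid x \in B\}$.

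Next, under the hypothesis $p \geq 5(h-1)$, the Bendel--Nakano--Pillen theorem cited above gives the identification
\[ H^1(\Galg(q), L(x.0)) \cong H^1(\Galg, L(x.0)) \]
for each such $x$, so the problem is transferred from the finite group $\Galg(q)$ to the algebraic group $\Galg$. Applying Andersen's Theorem~\ref{thm:KLextdim} with $y = 1$ (so that $V(y.0) = V(0)$ is the trivial module and ${\Ext}^1_\Galg(V(0), L(x.0)) = H^1(\Galg, L(x.0))$), we obtain
\[ \dim H^1(\Galg, L(x.0)) = [v]\, m_{1,x}(v), \]
the coefficient of $v$ in the parabolic Kazhdan-Lusztig polynomial $m_{1,x}$ of $(W_p, S')$ with respect to the finite Weyl group $W$. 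Since the character formula of Theorem~\ref{LuCF} is assumed to hold in characteristic $p$, this identification is valid for each $x \in B$.

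It remains to determine $\max_{x \in B} [v]\, m_{1,x}(v)$ for each of the listed types. This is where the work of Section~\ref{s:KLcomp} is used: Algorithm~\ref{alguMx} together with the Bruhat-map data structure of Algorithm~\ref{alg:bruhatmap} allows the full list of $m_{1,x}(v)$ for $x \in B$ to be computed, as reported in Section~\ref{ss:Computations}. Extracting the $v$-coefficient of each and taking the maximum yields the tabulated numbers. The main obstacle is therefore purely computational: the feasibility of carrying out Algorithm~\ref{alguMx} at the scale of $|B_\leq|$ from the tables (up to several hundred thousand elements in rank $6$), which is precisely what the efficient encoding of $B_x$ relative to the Bruhat map, the sparse variant using $B'_x$ in Lemma~\ref{lemmaBx}(a), the parity observation in Remark~\ref{rem:KLalg}(c), and the modular approach enabled by Lemma~\ref{KLcoeffbound} are designed to overcome. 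As a consistency check one may compare the $A_8$ entry with the independent computation of Scott and Sprowl in~\cite{SS16}, which yields the same value $36672$ modulo $2^{64}$.
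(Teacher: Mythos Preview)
Your proposal is correct and follows exactly the approach implicit in the paper: the paragraphs preceding the theorem set up the chain linkage principle $\Rightarrow$ Andersen's Theorem~\ref{thm:KLextdim} $\Rightarrow$ Bendel--Nakano--Pillen, reducing $\dim H^1(\Galg(q),L(\lambda))$ for $p$-restricted $\lambda$ to the coefficient of $v$ in $m_{1,x}$, and the table then simply records the maxima over $x\in B$ extracted from the computations of Section~\ref{ss:Computations}. The paper gives no separate formal proof beyond this discussion and the computation itself.
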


\noindent\textbf{Remarks.}\nopagebreak \begin{itemize}
\item[(a)] 
The  large coefficients  mentioned in  this table  occur in  Kazhdan-Lusztig
polynomials $m_{1,x}(v)$ where $x.0$ is close to the Steinberg weight. These
are among the computationally most  expensive polynomials for the respective
types. (So far, we could not finish  the computations for type $E_7$, but we
have already found dimensions of $H^1$  much larger than the largest in type
$E_6$ for weights which were still far from the Steinberg weight.)
\item[(b)]
The dimensions  mentioned for types $B_6$,  $C_6$ and $E_6$ are  much larger
than the order of  the Weyl group $W$ of $\Galg$, this  shows that the cross
characteristic assumption in Theorem~\ref{GTbound} is necessary.
\item[(c)]
The  dimensions  mentioned for  types  $A_7$,  $A_8$, $B_5$,  $B_6$,  $C_5$,
$C_6$,  $D_5$,  $D_6$,  $F_4$  and  $E_6$  are  larger  than  the  dimension
of  $\Galg$  as an algebraic variety. This  leads  to  the  construction  of
groups with  a huge  number of  maximal subgroups, as  will be  explained in
Section~\ref{s:WallConj}.
\item[(d)] 
Of course,  Conjecture~\ref{GConj} is  not disproved by  the finite  list of
examples given here. But the numbers  suggest to investigate the growth rate
of the maximal dimension of some $H^1$ in terms of the rank $l$ of the group
$\Galg$. From the numbers given in Theorem~\ref{maxdimH} even an exponential
growth in $l$ does not seem unlikely.

In~\cite{PSt14} it is shown that for $\Galg$ in any  characteristic and  any
dominant weight  $\lambda$ the  logarithm of this  maximal dimension  has an
upper bound of order $O(l^3\log(l))$.

Using   the  same   argument   as   in~\cite[5.1,5.2]{PSt14}  and   assuming
that    $\Galg$   and    $p$   are    as   in    Theorem~\ref{maxdimH}   our
estimate~\ref{KLcoeffbound}   yields   that  for   dominant   $p$-restricted
$\lambda$ we get the slightly stronger bound
\[ \dim H^1(\Galg, L(\lambda)) \leq 2^m, \]
where   $m$    is   the   length    of   the   longest   element    in   the
set   $B$   from~\ref{ss:Computations};   we    always   have   $m<3   l^3$,
see~\cite{Boe98},~\cite[3.3]{PSt14}.

For   the   groups   and   modules   considered   here,   the   bound   from
Theorem~\ref{GHbound}  could  be  improved  in~\cite[4.2.1]{BBDNPPW14},  the
factor  $\frac{1}{2}$ can  be  changed  to $\frac{1}{h}$  where  $h$ is  the
Coxeter number of the root system of $\Galg$, which grows linearly with $l$.
For small $p$ with respect to $l$ this may yield a better bound. The largest
dimension of any module $V(\lambda)$  with $\lambda$ a $p$-restricted weight
is $p^N$  where $N$  is the number  of positive roots  of $\Galg$,  this $N$
grows quadratically with $l$, we have for $l>8$:
\[\dim H^1(\Galg, L(\lambda)) \leq \frac{1}{l}p^N < \frac{1}{l}
p^{l^2}.\]

\item[(e)] 
We give  more details  for one example:  In case of  type $F_4$  the maximal
$\dim H^1(\Galg(q),L(\lambda)) = 882$  occurs for $\lambda = (p-2,p-2,p-2,9)
=  x.0$   for  an  $x$   of  length   $85$.  Using  the   Lusztig  character
formula~\ref{LuCF}, we can compute

%%  f := 
%%  677249/19600*p^21-637702548041/10478160000*p^20+4207651317557/8382528000*p^19-\
%%  64863221539889/30177100800*p^18+99959647171/22579200*p^17-72811406375072711/10\
%%  05903360000*p^16+5429608760885159/33530112000*p^15-1014183606287771/5029516800\
%%  *p^14+394068519721127/1117670400*p^13-311337743406684289/1508855040000*p^12-29\
%%  048331062847/13798400*p^11-19644400527431509/10059033600*p^10+1488822669834596\
%%  51/6706022400*p^9-20075834974540708571/1005903360000*p^8+90508002252050021/304\
%%  8192000*p^7-503999924098905349/3772137600*p^6+110387862657924361/279417600*p^5\
%%  -21129205276719213797/20956320000*p^4+2067338407751272429/698544000*p^3-807110\
%%  97402731773/11642400*p^2+137315811881887/13860*p-5616745816;
{\footnotesize 
\[
\begin{split}
\hspace{1em} & {\dim} L(\lambda)  = 
\frac{677249}{19600}p^{21}-\frac{637702548041}{10478160000}p^{20}+\frac{4207651317557}{8382528000}p^{19}\\
& -\frac{64863221539889}{30177100800}p^{18} +\frac{99959647171}{22579200}p^{17}-\frac{72811406375072711}{1005903360000}p^{16}
\\ &
+\frac{5429608760885159}{33530112000}p^{15}
-\frac{1014183606287771}{5029516800}p^{14}+\frac{394068519721127}{1117670400}p^{13}
\\ &
-\frac{311337743406684289}{1508855040000}p^{12}-\frac{29048331062847}{13798400}p^{11}-\frac{19644400527431509}{10059033600}p^{10}
\\ &
+\frac{148882266983459651}{6706022400}p^{9}-\frac{20075834974540708571}{1005903360000}p^{8}+\frac{90508002252050021}{3048192000}p^{7}
\\ &
-\frac{503999924098905349}{3772137600}p^{6}+\frac{110387862657924361}{279417600}p^{5}-\frac{21129205276719213797}{20956320000}p^{4}
\\ &
+\frac{2067338407751272429}{698544000}p^{3}-\frac{80711097402731773}{11642400}p^{2}+\frac{137315811881887}{13860}p-5616745816.
\\ &
\end{split}
\]}
\end{itemize}

\section{Wall's conjecture}\label{s:WallConj}

For a finite group  $G$ we write $m(G)$ for the  number of maximal subgroups
of $G$.

The following conjecture was stated in 1962.

\begin{conj}[Wall~\cite{Wall62}] \label{WC}
For any finite group $G$ we have $m(G) < |G|$.
\end{conj}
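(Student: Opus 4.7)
The plan is an induction on $|G|$, refining the approach that handled the solvable case in Wall's original paper. Let $G$ be a hypothetical minimal counterexample, so $m(G) \geq |G|$ but every proper quotient satisfies the conjecture. Choose a minimal normal subgroup $N \trianglelefteq G$. Each maximal subgroup $H \leq G$ either contains $N$, in which case it descends to a maximal subgroup of $G/N$ and by induction contributes at most $|G/N|-1$ to the total, or it \emph{supplements} $N$, meaning $HN=G$ and $H\cap N \lneq N$. A minimal counterexample must therefore carry at least $|G|-|G/N|+1 = (|N|-1)|G/N|+1$ supplementing maximal subgroups, so the whole task is to rule this out.

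In the \emph{abelian} case, $N$ is an irreducible $\F_p[G/N]$-module $V$, and by irreducibility any supplementing maximal $H$ satisfies $H\cap V = 0$, i.e.\ $H$ is a complement to $V$ in $G$. If $G$ does not split over $V$ there are no such $H$ and we are done, so assume $G = V \rtimes (G/N)$. By Lemma~\ref{ComplClasses} the $G$-conjugacy classes of complements are parametrized by $H^1(G/N,V)$, and each class has size $|G|/|H| = |V|$ (the normalizer of a complement equals itself because $V^{G/N}=0$), so the number of supplementing maximals is exactly $|V|\cdot |H^1(G/N,V)|$. To close the induction one would need
\[ |V|\cdot |H^1(G/N,V)| \leq (|V|-1)|G/N|, \]
i.e.\ $\dim_{\F_p}H^1(G/N,V)$ bounded by roughly $\log_p|G/N|$. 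In the \emph{non-abelian} case $N\cong T^k$ with $T$ simple non-abelian, I would apply the O'Nan--Scott theorem to classify the primitive actions of $G$ on $G/H$ and invoke CFSG-based bounds (in the spirit of Liebeck--Pyber--Shalev) on the number of maximal subgroups of almost simple groups to estimate the supplements type by type.

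The main obstacle is the abelian displayed inequality: it must hold for \emph{every} irreducible $\F_p[G/N]$-module $V$ that can occur, and Theorem~\ref{maxdimH} together with the cohomological interpretation in Theorem~\ref{thm:KLextdim} gives no reason to believe that $\dim_{\F_p}H^1(G/N,V)$ stays within the required logarithmic range when $G/N$ is allowed to be a finite group of Lie type and $V$ an irreducible module in the defining characteristic. Any successful proof along these lines would therefore need a genuinely new structural ingredient linking large $\dim H^1$ to a compensating scarcity elsewhere (for example, a smaller number of non-conjugate supplements, or fewer overlying simple modules of that dimension). Locating such an ingredient is precisely where I expect the argument to stand or fall, and without it I do not see how to turn this reduction into a proof.
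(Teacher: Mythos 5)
You cannot complete this argument, because Conjecture~\ref{WC} is false: the paper records it only as a conjecture of Wall and then \emph{disproves} it in Theorem~\ref{cexWall}. The obstacle you isolated in the abelian case --- the inequality $|V|\cdot|H^1(G/N,V)| \leq (|V|-1)\,|G/N|$ --- is not a gap awaiting a new idea but the exact point where the conjecture breaks. Take $G/N = H_q = \Galg(q)$ with $\Galg = F_4(\bar\F_p)$ for a suitable prime $p>55$, and $V = M_q$ the $\F_q$-form of the simple module $L(\lambda)$ with $\dim H^1(\Galg,L(\lambda)) = 882$ from Theorem~\ref{maxdimH}. Then $\dim_{\F_q} H^1(H_q,M_q) = 882$, every complement of $M_q$ in $G_q = M_q \rtimes H_q$ is maximal (by irreducibility of $M_q$ over $\F_p$, Proposition~\ref{fqmod}(c)), and Lemma~\ref{ComplClasses} gives at least $q^{882}\,|M_q|$ such maximal subgroups, while $|G_q| < q^{52}\,|M_q|$. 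So your required inequality fails by a factor of roughly $q^{830}$, and in fact $m(G_q) > |G_q|^{1+\delta}$ for a fixed $\delta>0$, refuting even the weakened form $m(G) < C\,|G|$.

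Your instinct was sound on two counts: the reduction to supplements of a minimal normal subgroup is the standard one (and is how Wall handled the solvable case, where $G/N$ is solvable and such large $H^1$ cannot occur), and you correctly suspected that Theorem~\ref{maxdimH} gives no reason to expect $\dim_{\F_p}H^1(G/N,V)$ to stay logarithmic in $|G/N|$ in defining characteristic. The missing step is simply to run your own setup in reverse: instead of trying to bound the number of complements from above, use Proposition~\ref{fqmod} and Lemma~\ref{ComplClasses} to count them from below for the $F_4$ example, which is precisely the content of the paper's proof of Theorem~\ref{cexWall}. The correct conclusion is not that a proof of Conjecture~\ref{WC} needs a further structural ingredient, but that no proof exists.
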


It is clear that there cannot be a smaller bound since for $G$ an elementary
abelian $2$-group $m(G) =  |G|-1$. In this example there is  a big number of
maximal subgroups of small index.

Other examples of groups with many  maximal subgroups are dihedral groups of
order $2p$ for a  prime $p$. In that case the tiny  Sylow subgroups of order
$2$ are all maximal and there are $p$ of them, so that $m(G) > |G|/2$.

More  generally   Wall  showed   in~\cite{Wall62}  that  the   statement  in
Conjecture~\ref{WC}  is true  for $G$  a solvable  group. Some  better upper
bounds for solvable groups $G$ in  terms of the prime factorization of $|G|$
are also known, see for example~\cite{New11} and~\cite{HM95}.

The best known upper bounds for non-abelian simple groups and general finite
groups are given in the following theorem. Its proof uses the classification
of finite simple groups and many non-trivial results about maximal subgroups
of simple groups, see~\cite{LPS07}.

\begin{theorem}[Liebeck-Pyber-Shalev]\label{LPSWall}
There exists an absolute constant $C$ such that
\begin{itemize}
\item[(a)] for any almost simple group $G$  with socle a finite group of Lie
type  of rank  $l$ we  have $m(G)  < C  \cdot \left(\frac{1}{l}\right)^{2/3}
|G|$,
\item[(b)] and for any finite group $G$ we have $m(G) < 2 C |G|^{3/2}$.
\end{itemize}
\end{theorem}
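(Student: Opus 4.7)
The plan is to treat parts (a) and (b) in sequence, since (b) invokes (a) as its main input. For (a), I would exploit the classification of maximal subgroups of almost simple groups of Lie type---Aschbacher's theorem in the classical case, and the Liebeck--Seitz theorems in exceptional type---to partition the maximal subgroups $M$ into natural families: parabolics, reductive subgroups of maximal rank, field-extension and tensor-type stabilisers, local subgroups, and the residual Aschbacher class $\mathcal{S}$ of almost simple subgroups in irreducible representations. Since $m(G) = \sum_{[M]} |G:M|$ over $G$-conjugacy classes of maximals, it is enough to bound, for each family, both the number of classes and the contribution per class.

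Each geometric family contains only polynomially many conjugacy classes in $l$---at most $2^l$ parabolic classes, $O(l^2)$ maximal-rank classes, and so on---while a maximal parabolic satisfies $|G:P|\geq q^{cl}$ for some absolute $c>0$. The $\mathcal{S}$-class contribution is controlled using Liebeck's bounds on the orders of irreducible quasisimple subgroups of classical groups, which give $|M| \leq q^{O(l\log l)}$ and hence a negligible contribution to $m(G)$. The dominant terms come from parabolic and maximal-rank subgroups, and together their contribution is of the form $C' l^{-2/3}|G|$ for $l$ sufficiently large; the finitely many small-rank exceptions are absorbed into the constant $C$.

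For (b), I would induct on $|G|$ along a chief series $1 = G_0 \triangleleft G_1 \triangleleft \cdots \triangleleft G_r = G$. Any maximal subgroup $M$ of $G$ either contains some $G_i$ (in which case it descends to a maximal of the proper quotient $G/G_i$) or complements the chief factor $N := G_{i+1}/G_i$ inside $G/G_i$. For abelian $N$, Lemma~\ref{ComplClasses} identifies the conjugacy classes of complements with $H^1(G/G_{i+1},N)$, each class containing at most $|N|$ elements, and elementary estimates on $|H^1|$ suffice. For non-abelian $N$, the primitive quotient of $G$ acting on cosets of $M$ is an almost simple group with socle of Lie type (or alternating, handled separately) to which part (a) applies after the O'Nan--Scott reduction. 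Summing along the chief series and optimising over where the heavy factor sits yields a bound of the form $C'|G|^{3/2}$; the exponent $3/2$ shows up in the worst case, when a single non-abelian chief factor has order close to $|G|^{1/2}$ and its quotient acts on it via part (a).

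The main obstacle is the uniformity of the constant $C$ in (a): each Aschbacher class requires a uniform polynomial-in-$l$ bound on the number of conjugacy classes of maximal subgroups of that shape, resting on detailed structural results across classical and exceptional types, including a number of small-rank exceptions that must be treated individually. A secondary difficulty in (b) is controlling the complements of abelian chief factors, whose count is governed by $|H^1|$; as the present paper demonstrates, this quantity can grow unexpectedly fast in the rank of the ambient algebraic group, so any clean induction must absorb this growth into the constant.
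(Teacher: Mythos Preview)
The paper does not prove Theorem~\ref{LPSWall} at all: it is quoted as a result of Liebeck, Pyber and Shalev with a reference to~\cite{LPS07}, and the only commentary is the remark immediately following, explaining informally why the exponent $3/2$ appears. So there is no ``paper's own proof'' to compare your proposal against; what you have written is a plausible outline of the argument in the original Liebeck--Pyber--Shalev paper, not a reconstruction of anything in the present article.

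That said, one point in your sketch is off and worth flagging. In part~(b) you attribute the exponent $3/2$ to the case of a non-abelian chief factor of order about $|G|^{1/2}$. The paper's own remark (and the actual argument in~\cite{LPS07}) trace the $3/2$ to the \emph{abelian} chief factor case: if $N$ is an irreducible $\F_p(G/N)$-module, the number of classes of complements is $|H^1(G/N,N)|$, and Theorem~\ref{GHbound} gives $|H^1(G/N,N)|\le |N|^{1/2}$, whence the contribution is at most $|N|^{3/2}$. For non-abelian chief factors, complements of a minimal normal subgroup are either all conjugate or absent, so that case contributes nothing to the exponent; the reduction there is instead to part~(a) via O'Nan--Scott, as you say, but it is not the source of the $3/2$. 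Your inductive framework along a chief series is otherwise the right shape.
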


\noindent\textbf{Remarks.} 
\begin{itemize}
\item[(a)]  Theorem~\ref{LPSWall}(a)  shows  that   a  weaker  form  of  the
statement in Wall's conjecture (with a  constant times $|G|$ as upper bound)
holds for  Lie type simple  groups and the  original statement holds  if the
rank is large enough.
\item[(b)] The exponent  $3/2 = 1 + \frac{1}{2}$  in ~\ref{LPSWall}(b) comes
from considering  configurations $G =  M \rtimes H$,  where $M$ is  a finite
irreducible module  for a  finite group  $H$, and  estimating the  number of
complements of  $M$ in  $G$ using  $H^1(H,M)$ as  in~\ref{ComplClasses}; the
$\frac{1}{2}$ term comes from the estimate in~\ref{GHbound}.
\end{itemize}

We now show that (also the weaker  form of) Wall's conjecture is not true by
constructing examples of type $G = M  \rtimes H$ where $H$ is a finite group
of Lie type and $M$ is a  finite module constructed from certain examples in
Theorem~\ref{maxdimH}. To explain how to  find appropriate finite modules we
need the following proposition.

\begin{proposition}\label{fqmod}
Let  $\Galg$  be  a  connected reductive  algebraic  group  over  $\bar\F_q$
as  in  Section~\ref{s:KLapp}, defined  over  $\F_q$,  and $H=\Galg(q)$  the
corresponding finite subgroup. For $p={\textrm char}(\F_q)$ let $\lambda$ be
a  $p$-restricted  weight for  $\Galg$  and  $L(\lambda)$ the  corresponding
simple module.
\begin{itemize}
\item[(a)] 
The restriction of $L(\lambda)$ to the finite group $H$ is simple.
\item[(b)] $L(\lambda)$  as $\bar\F_qH$-module can be  realized over $\F_q$,
that is  there is  an irreducible  $\F_qH$-module $M$  with $L(\lambda)  = M
\otimes_{\F_q} \bar\F_q$.
\item[(c)] If  $\lambda \neq 0$ then  the $\F_qH$-module $M$ in~(b)  is also
irreducible as $\F_pH$-module.
\end{itemize}
\end{proposition}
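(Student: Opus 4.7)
Part (a) is immediate from Steinberg's restriction theorem~\cite[1.3]{St63}: a $p$-restricted weight is in particular $q$-restricted, so $L(\lambda)|_H$ is simple as a $\bar\F_q H$-module.

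For (b), the plan is to descend $L(\lambda)$ to $\F_q$ at the level of rational $\Galg$-modules and then restrict to $H$. Kostant's admissible $\Z$-lattice in the Weyl module $V(\lambda)$ gives an $\F_p$-form of $V(\lambda)$; its unique maximal proper $\Galg$-submodule is also defined over $\F_p$, so the quotient has an $\F_p$-form $L(\lambda)_{\F_p}$. Setting $M := L(\lambda)_{\F_p} \otimes_{\F_p}\F_q$ and restricting the $\Galg_{\F_q}$-comodule structure to $H = \Galg(\F_q)$, one obtains an $\F_qH$-module with $M \otimes_{\F_q} \bar\F_q \cong L(\lambda)|_H$; the latter is simple by (a), hence $M$ is absolutely simple, in particular irreducible.

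For (c), the plan is Galois descent. Write $q = p^r$ and let $\phi \in \mathrm{Gal}(\bar\F_q/\F_p)$ be the arithmetic Frobenius $x\mapsto x^p$. Then $M\otimes_{\F_p}\bar\F_p \cong \bigoplus_{i=0}^{r-1}(L(\lambda)|_H)^{\phi^i}$, and $M$ is $\F_pH$-simple iff these $r$ $\bar\F_qH$-modules are pairwise non-isomorphic. The key identification is $(L(\lambda)|_H)^\phi \cong L(p\lambda)|_H$ as $\bar\F_qH$-modules: the character of the Galois-twisted module at $h\in H$ equals $\chi_{L(\lambda)}(h)^p$, while the character of the algebraic Frobenius twist $L(\lambda)^{(p)} = L(p\lambda)$ at $h$ is $\chi_{L(\lambda)}(F(h))$, where $F$ is the Frobenius endomorphism of $\Galg$; in characteristic $p$, raising the eigenvalues of $h$ on $L(\lambda)$ to the $p$-th power yields the eigenvalues of $F(h)$ on $L(\lambda)$, so the two characters coincide. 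Iterating gives $(L(\lambda)|_H)^{\phi^i} \cong L(p^i\lambda)|_H$; for a nonzero $p$-restricted $\lambda = \sum a_j\omega_j$ the coefficients $p^i a_j$ lie in $[0,q)$ and are pairwise distinct for $i = 0,\ldots,r-1$, so by Steinberg's parameterization of simple $\bar\F_qH$-modules these $r$ Galois conjugates are pairwise non-isomorphic, and $M$ is $\F_pH$-simple.

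The main obstacle is bridging the arithmetic Galois twist (on $\bar\F_qH$-modules) with the algebraic Frobenius twist (on rational $\Galg$-modules); the ingredient that makes this bridge work is that in characteristic $p$ the $p$-th-power map is a ring endomorphism, so the trace commutes with raising eigenvalues to the $p$-th power, and the hypothesis $\lambda \neq 0$ is precisely what ensures the resulting $q$-restricted weights $p^i\lambda$ are distinct.
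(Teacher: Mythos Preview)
Your proof is correct and follows essentially the same route as the paper's. For~(a) and~(b) the paper simply cites Steinberg~\cite[1.3 and 7.5]{St63} rather than spelling out the descent via the Kostant $\Z$-form, and for~(c) the paper phrases the criterion as ``the character field of $M$ equals $\F_q$'' (citing~\cite[1.16\,e)]{HuII}) instead of decomposing $M\otimes_{\F_p}\bar\F_p$, but the substantive step is identical in both: the twists $L(p^i\lambda)|_H$ for $0\le i<r$ are pairwise non-isomorphic simple $\bar\F_qH$-modules because the $p^i\lambda$ are distinct $q$-restricted dominant weights.
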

\begin{proof}
Parts~(a) and~(b) are shown in~\cite[1.3 and 7.5]{St63}.

For~(c) we  use~\cite[1.16 e)]{HuII} which  says that $M$ is  irreducible as
$\F_pH$-module  if  and only  if  the  character  field  of $M$  (the  field
generated by the traces of all $h \in  H$ on $M$) is $\F_q$. We have to show
that the character  field of $M$ is  $\F_q$. Let $q = p^f$  and let $\sigma$
be  the field  automorphism  of  $\bar\F_q$ that  raises  elements to  their
$p$-th power  and so  has $\F_p$  as fixed  field. Concatenating  the module
action of  $\Galg$ on $L(\lambda)$  with the field  automorphisms $\sigma^0,
\ldots, \sigma^{f-1}$ yields the Frobenius  twists of $L(\lambda)$ which are
$L(\lambda), L(p\lambda), \ldots, L(p^{f-1}\lambda)$.  Since we have assumed
that $\lambda$ is $p$-restricted, all of these twists are $q$-restricted and
since  $\lambda  \neq  0$  they  are  pairwise  different.  Therefore  their
restrictions to $H$ are all simple and pairwise non-isomorphic by~\cite[6.1,
7.4]{St63}.  This  shows  that  the  character field  of  $M$  cannot  be  a
proper subfield of $\F_q$.  (Non-isomorphic irreducible representations have
linearly independent characters, see~\cite[1.11(b)]{HuII}.)
\end{proof}

Now we can show the following theorem.

\begin{theorem}\label{cexWall}
There is a  real number $\delta >  0$ such that there  exist infinitely many
finite groups $G$ with
\[ m(G) > |G|^{1+\delta}. \]
\end{theorem}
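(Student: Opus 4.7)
My plan is to exhibit the counterexamples explicitly as semidirect products $G = V \rtimes H$ where $H = \Galg(q)$ for $\Galg$ of type $F_4$ (or any other type appearing in Remark~(c) after Theorem~\ref{maxdimH}, i.e.\ a type where $\dim H^1 > \dim \Galg$), and $V$ is the finite $\F_p H$-module furnished by Proposition~\ref{fqmod} from the simple module $L(\lambda)$ whose $H^1$ attains the value in Theorem~\ref{maxdimH}. First I would fix a prime $p \geq 5(h-1)$ for which the character formula~\ref{LuCF} holds and for which the tabulated value $\dim H^1(\Galg(q),L(\lambda))=882$ is realised; then I would let $q = p^f$ range over the powers of $p$, giving an infinite family indexed by $f$.

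The maximal subgroups I want to count are the complements of $V$ in $G$. Since $\lambda \neq 0$, the $H$-action on $V$ is non-trivial and irreducible over $\F_p$, so a short standard argument shows that for any complement $C$ the intersection $N_G(C) \cap V$ is an $H$-submodule of $V$ and cannot equal $V$ (else $C \trianglelefteq G$ and $V$ would centralise $C \cong H$); hence $N_G(C) = C$. The same irreducibility of $V$ forces every complement to be a maximal subgroup of $G$. By Lemma~\ref{ComplClasses} there are exactly $|H^1(H,V)|$ conjugacy classes of complements, each of size $|G|/|C| = |V|$, contributing at least $|H^1(H,V)| \cdot |V|$ maximal subgroups to $m(G)$.

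Next I would compute $|H^1(H,V)|$. Because $V \otimes_{\F_p} \bar{\F}_p$ decomposes as the direct sum of the $f$ Frobenius/Galois conjugates $L(\lambda), L(p\lambda), \ldots, L(p^{f-1}\lambda)$, and because Galois twisting permutes these summands while preserving cohomology dimensions, Bendel--Nakano--Pillen gives
\[
\dim_{\F_p} H^1(H,V) \;=\; f \cdot \dim H^1(\Galg,L(\lambda)) \;=\; 882 f,
\]
so $|H^1(H,V)| = q^{882}$. Writing $D := \dim L(\lambda)$, we have $|V| = q^{D}$, and the usual product formula for $|\Galg(q)|$ gives $|H| \leq q^{\dim \Galg} = q^{52}$. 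Combining these bounds,
\[
m(G) \;\geq\; |H^1(H,V)|\cdot |V| \;=\; q^{882 + D}, \qquad |G| \;\leq\; q^{52 + D}.
\]

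The conclusion is then immediate from exponent arithmetic: the ratio $\log m(G) / \log |G|$ is bounded below by $(D+882)/(D+52)$, a constant strictly exceeding~$1$, so any choice of $\delta \in (0, (830)/(D+52))$ yields $m(G) > |G|^{1+\delta}$ for every $f$, and letting $f \to \infty$ produces infinitely many such $G$. The main obstacle I foresee is not the estimate itself but the careful bookkeeping linking algebraic-group cohomology, finite-group cohomology of $\Galg(q)$ over $\bar{\F}_p$, and finite-group cohomology of $V$ over $\F_p$: one must verify that the Galois descent from $\bar{\F}_p$ to $\F_p$ multiplies the dimension by exactly $f$, invoke Bendel--Nakano--Pillen in the correct form to pass from $H^1(\Galg,\cdot)$ to $H^1(H,\cdot)$, and check that the chosen $p$ simultaneously satisfies all the hypotheses (large enough for BNP, the character formula valid, $\lambda$ still $p$-restricted). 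Once this bookkeeping is in place, the remainder is routine.
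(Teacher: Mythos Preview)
Your proposal is correct and follows essentially the same route as the paper: fix $p$ and $\lambda$ for type $F_4$ with $\dim H^1=882$, form $G_q = V \rtimes \Galg(q)$ for all powers $q$ of $p$, use irreducibility of $V$ over $\F_p$ to see that complements are maximal and self-normalizing, count $q^{882}\cdot|V|$ maximal subgroups, and compare the exponents $882+D$ and $52+D$ to extract $\delta=830/(D+52)$. The one place you diverge is the computation of $|H^1(H,V)|$: the paper works with the $\F_q$-form $M_q$ and simply observes that $H^1$ is defined by a system of linear equations with coefficients in $\F_q$, so $\dim_{\F_q}H^1(H,M_q)=\dim_{\bar\F_q}H^1(H,L(\lambda))=882$ directly, whereas you descend to $\F_p$, split $V\otimes_{\F_p}\bar\F_p$ into the $f$ Frobenius twists, and argue that each twist contributes $882$. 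Both arguments are valid and yield $|H^1|=q^{882}$; the paper's base-change observation is a little cleaner since it sidesteps the need to verify that all $f$ twists have equal cohomology, while your version makes the self-normalization of complements explicit, a detail the paper leaves implicit.
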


\begin{proof}
Fix a prime $p>55$ such  that the Lusztig character formula~\ref{LuCF} holds
for the simple algebraic group $\Galg = F_4(\bar\F_p)$. Let $\lambda$ be the
$p$-restricted weight  of $\Galg$  as in  Theorem~\ref{maxdimH}, that  is we
have $\dim_{\bar\F_p} H^1(\Galg, L(\lambda)) = 882$.

For  each   power  $q$   of  $p$   let  $H_q   =  \Galg(q)$.   According  to
Proposition~\ref{fqmod}(b) there  is a  finite $\F_q H_q$-module  $M_q$ such
that $L(\lambda) = M_q \otimes_{\F_q} \bar\F_q$. Consider
\[G_q := M_q \rtimes H_q.\]

Then $H_q$ and any other complement of  $M_q$ in $G_q$ is a maximal subgroup
of  $G_q$. Indeed,  if  $x \in  G_q  \setminus H_q$,  then  $\langle H_q,  x
\rangle$ contains  a non-trivial  element in $M_q$  and, since  according to
Proposition~\ref{fqmod}(c)  $M_q$ is  an irreducible  $\F_pH_q$-module, also
contains an  $\F_p$-basis of  $M_q$, that  is a generating  set of  $M_q$ as
abelian group.

Using the definition of $H^1$ in the introduction of Section~\ref{s:GurConj}
we see that $H^1(H_q,  M_q)$ can be computed in terms of  a system of linear
equations with coefficients in $\F_q$. We conclude that
\[ \dim_{\F_q} H^1(H_q, M_q) = \dim_{\bar\F_q} H^1(H_q, L(\lambda)).\]
From  Theorem~\ref{maxdimH} we  know  that the  latter  dimension is  $882$.
Hence, by    Lemma~\ref{ComplClasses}, there are $q^{882}$ conjugacy classes
of complements of $M_q$ in $G_q$ and so we see that $G_q$ has at least
\[q^{882} |M_q|\]
maximal subgroups.

For the order of $G_q$  we have (see~\cite[2.9]{Ca}) 
\[|G_q| = |M_q| |H_q| = |M_q| q^{24}(q^2-1)(q^6-1)(q^8-1)(q^{12}-1)
< q^{52} |M_q|.\]

Now  let  $d  =  \dim_{\F_q}M_q =  \dim_{\bar\F_q}  L(\lambda)$.  Note  that
$d$  only  depends   on  our  fixed  $p$,  as  given   in  Remark~(e)  after
Theorem~\ref{maxdimH}, and not on $q$. Set $\delta = \frac{830}{d+52}$. Then
we have
\[ m(G_q) > q^{882+d} = (q^{52+d})^{1+\delta} > |G_q|^{1+\delta}\]
for all powers $q$ of $p$.  
\end{proof}

The  $\delta$  found  in  the  proof of~\ref{cexWall}  is  larger  when  the
characteristic is  small. If for  example the Lusztig character  formula was
valid  for type  $F_4$  and  $p=59$, we  would  get  $\delta \cong  1.6\cdot
10^{-36}$.  Fiebig's proven  upper bound  in~\cite{F12} for  the primes  for
which the Lusztig character formula may not be valid is not easy to evaluate
explicitly,  but it  is so  huge that  the resulting  $\delta$ is  extremely
small.  (The bound  involves the  maximal value  $r$ of  the Kazhdan-Lusztig
polynomials  computed   for  type  $F_4$   in  Section~\ref{ss:Computations}
evaluated at $1$ (which is $r=74628593$) and expressions containing a factor
$(r!)^r$.)

So, what is the minimal number $\varepsilon$ such that for all finite groups
$G$ we have  $m(G) < A \cdot |G|^{1+\varepsilon}$ with  some global constant
$A$? From Theorem~\ref{cexWall} and Theorem~\ref{LPSWall} we know
\[ 0 < \varepsilon < \frac{1}{2}.\]
With the method we used to  prove Theorem~\ref{cexWall} applied to groups of
rank  $l$ we  can  never  show that  $\varepsilon  >  \frac{1}{l}$, even  if
the  upper bounds  for dimensions  of  $H^1$ mentioned  in Remark~(d)  after
Theorem~\ref{maxdimH} are achieved.
\bigskip

\bibliographystyle{plain}
\bibliography{KLPolyGuralnickWallConj}

\begin{thebibliography}{10}

\bibitem{And86}
H.~H. Andersen.
\newblock An inversion formula for the {K}azhdan-{L}usztig polynomials for
  affine {W}eyl groups.
\newblock {\em Adv. in Math.}, 60(2):125--153, 1986.

\bibitem{AJS94}
H.~H. Andersen, J.~C. Jantzen, and W.~Soergel.
\newblock Representations of quantum groups at a {$p$}th root of unity and of
  semisimple groups in characteristic {$p$}: independence of {$p$}.
\newblock {\em Ast\'erisque}, 220:321, 1994.

\bibitem{BBDNPPW14}
C.~P. Bendel, B.~D. Boe, C.~M. Drupieski, D.~K. Nakano, B.~J. Parshall,
  C.~Pillen, and C.~B. Wright.
\newblock Bounding the dimensions of rational cohomology groups.
\newblock In {\em Developments and retrospectives in {L}ie theory}, volume~38
  of {\em Dev. Math.}, pages 51--69. Springer, Cham, 2014.

\bibitem{BNP01}
C.~P. Bendel, D.~K. Nakano, and C.~Pillen.
\newblock On comparing the cohomology of algebraic groups, finite {C}hevalley
  groups and {F}robenius kernels.
\newblock {\em J. Pure Appl. Algebra}, 163(2):119--146, 2001.

\bibitem{BenCohI}
D.~J. Benson.
\newblock {\em Representations and cohomology. {I}}, volume~30 of {\em
  Cambridge Studies in Advanced Mathematics}.
\newblock Cambridge University Press, Cambridge, second edition, 1998.
\newblock Basic representation theory of finite groups and associative
  algebras.

\bibitem{Boe98}
B.~Boe.
\newblock The maximum length in the restricted region.
\newblock (unpublished), 1998.

\bibitem{BW08}
J.~N. Bray and R.~A. Wilson.
\newblock Examples of 3-dimensional 1-cohomology for absolutely irreducible
  modules of finite simple groups.
\newblock {\em J. Group Theory}, 11(5):669--673, 2008.

\bibitem{Ca}
R.~W. Carter.
\newblock {\em Finite groups of {L}ie type}.
\newblock Wiley Classics Library. John Wiley \& Sons, Ltd., Chichester, 1993.
\newblock Conjugacy classes and complex characters, Reprint of the 1985
  original, A Wiley-Interscience Publication.

\bibitem{Deo87}
V.~V. Deodhar.
\newblock On some geometric aspects of {B}ruhat orderings. {II}. {T}he
  parabolic analogue of {K}azhdan-{L}usztig polynomials.
\newblock {\em J. Algebra}, 111(2):483--506, 1987.

\bibitem{duCloux02}
F.~du~Cloux.
\newblock Computing {K}azhdan-{L}usztig polynomials for arbitrary {C}oxeter
  groups.
\newblock {\em Experiment. Math.}, 11(3):371--381, 2002.

\bibitem{Coxeter3}
F.~du~Cloux.
\newblock The \textsc{Coxeter 3} program.
\newblock
  \href{http://math.univ-lyon1.fr/~ducloux/coxeter/coxeter3/}{\texttt{http://math.univ-lyon1.fr/\~{}ducloux\allowbreak/coxeter\allowbreak/coxeter3/}},
  2005.

\bibitem{EW12}
B.~Elias and G.~Williamson.
\newblock The {H}odge theory of {S}oergel bimodules.
\newblock {\em Ann. of Math. (2)}, 180(3):1089--1136, 2014.

\bibitem{F12}
P.~Fiebig.
\newblock An upper bound on the exceptional characteristics for {L}usztig's
  character formula.
\newblock {\em J. Reine Angew. Math.}, 673:1--31, 2012.

\bibitem{GAP4.7.6}
{GAP} {\textendash} {G}roups, {A}lgorithms, and {P}rogramming, {V}ersion 4.7.6.
\newblock \href {http://www.gap-system.org}
  {\texttt{http://www.gap-system.org}}, Nov 2014.

\bibitem{CHEVIE}
M.~Geck, G.~Hiss, F.~L{\"u}beck, G.~Malle, and G.~Pfeiffer.
\newblock C{HEVIE}---a system for computing and processing generic character
  tables.
\newblock {\em Appl. Algebra Engrg. Comm. Comput.}, 7(3):175--210, 1996.
\newblock Computational methods in Lie theory (Essen, 1994).

\bibitem{G86}
R.~M. Guralnick.
\newblock The dimension of the first cohomology group.
\newblock In {\em Representation theory, {II} ({O}ttawa, {O}nt., 1984)}, volume
  1178 of {\em Lecture Notes in Math.}, pages 94--97. Springer, Berlin, 1986.

\bibitem{GH98}
R.~M. Guralnick and C.~Hoffman.
\newblock The first cohomology group and generation of simple groups.
\newblock In {\em Groups and geometries ({S}iena, 1996)}, Trends Math., pages
  81--89. Birkh\"auser, Basel, 1998.

\bibitem{GT11}
R.~M. Guralnick and P.~H. Tiep.
\newblock First cohomology groups of {C}hevalley groups in cross
  characteristic.
\newblock {\em Ann. of Math. (2)}, 174(1):543--559, 2011.

\bibitem{HM95}
M.~Herzog and O.~Manz.
\newblock On the number of subgroups in finite solvable groups.
\newblock {\em J. Austral. Math. Soc. Ser. A}, 58(1):134--141, 1995.

\bibitem{Hum90}
J.~E. Humphreys.
\newblock {\em Reflection groups and {C}oxeter groups}, volume~29 of {\em
  Cambridge Studies in Advanced Mathematics}.
\newblock Cambridge University Press, Cambridge, 1990.

\bibitem{HuII}
B.~Huppert and N.~Blackburn.
\newblock {\em Finite groups. {II}}, volume 242 of {\em Grundlehren der
  Mathematischen Wissenschaften}.
\newblock Springer-Verlag, Berlin-New York, 1982.
\newblock AMD, 44.

\bibitem{Jan03}
J.~C. Jantzen.
\newblock {\em Representations of algebraic groups}, volume 107 of {\em
  Mathematical Surveys and Monographs}.
\newblock American Mathematical Society, Providence, RI, second edition, 2003.

\bibitem{KL79}
D.~Kazhdan and G.~Lusztig.
\newblock Representations of {C}oxeter groups and {H}ecke algebras.
\newblock {\em Invent. Math.}, 53(2):165--184, 1979.

\bibitem{LPS07}
M.~W. Liebeck, L.~Pyber, and A.~Shalev.
\newblock On a conjecture of {G}. {E}. {W}all.
\newblock {\em J. Algebra}, 317(1):184--197, 2007.

\bibitem{LuCF}
G.~Lusztig.
\newblock Some problems in the representation theory of finite {C}hevalley
  groups.
\newblock In {\em The {S}anta {C}ruz {C}onference on {F}inite {G}roups ({U}niv.
  {C}alifornia, {S}anta {C}ruz, {C}alif., 1979)}, volume~37 of {\em Proc.
  Sympos. Pure Math.}, pages 313--317. Amer. Math. Soc., Providence, R.I.,
  1980.

\bibitem{New11}
B.~Newton.
\newblock On the number of maximal subgroups of a finite solvable group.
\newblock {\em Arch. Math. (Basel)}, 96(6):501--506, 2011.

\bibitem{PSt14}
A.~E. Parker and D.~I. Stewart.
\newblock First cohomology groups for finite groups of {L}ie type in defining
  characteristic.
\newblock {\em Bull. Lond. Math. Soc.}, 46(2):227--238, 2014.

\bibitem{Scott03}
L.~L. Scott.
\newblock Some new examples in 1-cohomology.
\newblock {\em J. Algebra}, 260(1):416--425, 2003.

\bibitem{SS16}
L.~L. Scott and T.~Sprowl.
\newblock Computing individual {K}azhdan-{L}usztig basis elements.
\newblock {\em J. Symbolic Comput.}, 73:244--249, 2016.

\bibitem{Soe97}
W.~Soergel.
\newblock Kazhdan-{L}usztig-{P}olynome und eine {K}ombinatorik f\"ur
  {K}ipp-{M}oduln.
\newblock {\em Represent. Theory}, 1:37--68 (electronic), 1997.

\bibitem{St63}
R.~Steinberg.
\newblock Representations of algebraic groups.
\newblock {\em Nagoya Math. J.}, 22:33--56, 1963.

\bibitem{Wall62}
G.~E. Wall.
\newblock Some applications of the {E}ulerian functions of a finite group.
\newblock {\em J. Austral. Math. Soc.}, 2:35--59, 1961/1962.

\bibitem{W13}
G.~{Williamson}.
\newblock {Schubert calculus and torsion explosion}.
\newblock {\em ArXiv e-prints}, 1309.5055, September 2013.

\end{thebibliography}

\end{document}